\newtheorem{thm}{Theorem}[section]
\newtheorem{prop}{Proposition}[section]
\newtheorem{defi}{Definition}[section]
\newtheorem{lem}{Lemma}[section]
\newtheorem{rem}{Remark}[section]
\theoremstyle{notation}
\newcommand{\R}{\mathbb{R}}
\numberwithin{equation}{section}
\newcommand{\eps}{\epsilon}
\newcommand{\wto}{\rightharpoonup}
\makeatletter \@addtoreset{equation}{section} \makeatother
\newcounter{const}
\title[Double Phase Eigenvalue Problems]{Non-autonomous double phase eigenvalue problems with indefinite weight and lack of compactness} 
\date{\today}
\thanks{{\it Acknowledgments}. T. Gou was supported by the National Natural Science Foundation of China (No. 12101483) and the Postdoctoral Science Foundation of China (No. 2021M702620). V.D. R\u adulescu was supported by a grant of the Romanian Ministry of Research, Innovation, and Digitization, CNCS/CCCDI-UEFISCDI (No. PCE 137/2021), within PNCDI III. The authors would like to thank warmly the anonymous referees for his/her very precise reading of our manuscript and for giving constructive comments and suggestions.}
\author[T. Gou]{{Tianxiang Gou}}
\address{School of Mathematics and Statistics, Xi'an Jiaotong University,\\
	Xi'an, Shaanxi 710049, China}
\email{tianxiang.gou@xjtu.edu.cn}
\author[V.D.\ R\u adulescu]{{Vicen\c tiu D. R\u adulescu}}
\address{Faculty of Applied Mathematics, AGH University of Science and Technology,\\
\qquad\qquad\qquad	al. Mickiewicza 30, 30-059
	Krakow, Poland \newline\indent Brno University of Technology, Faculty of Electrical Engineering and Communication, \\
	\qquad\qquad\qquad Technick\'a 3058/10, Brno,
	61600, Czech Republic
	\newline\indent
	Department of Mathematics, University of
	Craiova, 200585 Craiova, Romania 
}
\email{radulescu@inf.ucv.ro}
\begin{document}

\begin{abstract}
In this paper, we consider eigenvalues to the following double phase problem with unbalanced growth and indefinite weight,
$$
-\Delta_p^a u-\Delta_q u =\lambda m(x) |u|^{q-2}u \quad \mbox{in} \,\, \R^N,
$$
where {$N \geq 2$}, {$1<p, q<N$, $p \neq q$}, ${a \in C^{0, 1}(\R^N, [0, +\infty))}$, $a \not\equiv 0$ and $m: \R^N \to \R$ is {an indefinite sign weight which may admit nontrivial positive and negative parts}. Here $\Delta_q$ is the $q$-Laplacian operator and $\Delta_p^a$ is the weighted $p$-Laplace operator defined by $\Delta_p^a u:=\textnormal{div}(a(x) |\nabla u|^{p-2} \nabla u)$. The problem can be degenerate, in the sense that the infimum of $a$ in $\R^N$ may be zero. Our main results distinguish between the cases $p<q$ and $q<p$. In the first case, we establish the existence of a {\it continuous} family of eigenvalues, starting from the principal frequency of a suitable single phase eigenvalue problem. In the latter case, we prove the existence of a {\it discrete} family of positive eigenvalues, which diverges to infinity.

\smallskip\noindent {\sc Key words:} Non-autonomous double phase eigenvalue problem; Indefinite weight; Lack of compactness; Ljusternik-Schnirelman theory.

\smallskip\noindent {\sc Mathematics Subject Classification:} Primary: 35P30; Secondary: 35J70, 46E30, 47J10, 58C40, 58E05.
\end{abstract}

\maketitle

\thispagestyle{empty}

\section{Introduction}

In this paper, we investigate eigenvalues to the following double phase problem with unbalanced growth and indefinite weight,
\begin{align} \label{eque}
-\Delta_p^a u-\Delta_q u =\lambda m(x) |u|^{q-2}u \quad \mbox{in} \,\, \R^N,
\end{align}
where {$N \geq 2$}, {$1<p, q<N$, $p \neq q$}, ${a \in C^{0, 1}(\R^N, [0, +\infty))}$, $a \not\equiv 0$ and $m: \R^N \to \R$ is {an indefinite sign weight which may admit nontrivial positive and negative parts}. Here $\Delta_q$ is the $q$-Laplacian operator and $\Delta_p^a$ is the weighted $p$-Laplace operator defined by $\Delta_p^a u:=\textnormal{div}(a(x) |\nabla u|^{p-2} \nabla u)$. Throughout of this paper, we shall always assume that the weight function $m : \R^N \to \R$ satisfies the following assumption, 
\begin{itemize}
\item[$(H)$] $m=m_1-m_2$, where $m_1, m_2\geq 0$, $m_1 \not \equiv 0$, $m_1 \in L^{\frac Nq}(\R^N) \cap L^{\infty}(\R^N)$ and $m_2 \in L^{\infty}(\R^N)$.
\end{itemize}

\begin{rem}
In our case, $m_2=0$ is allowable.
\end{rem}

Problems like \eqref{eque} arise when one looks for the stationary solutions of reaction-diffusion
systems of the form
$$
u_t=\mbox{div}\,[D(x,\nabla u)\nabla u]+g(x,u)\quad (x,t)\in\R^N\times (0,\infty),
$$
where $D(x,\nabla u)=a(x)|\nabla u|^{p-2}+|\nabla u|^{q-2}$. This system has a wide range of applications in physics
and related fields, such as biophysics, plasma physics, and chemical reaction design (see
\cite{p12, Si}). In such applications, the function $u$ is a state variable and describes density or concentration
of multi-component substances, $\mbox{div}\,[D(x,\nabla u)\nabla u]$ corresponds to the diffusion with a
diffusion coefficient $D(x,\nabla u)$, and $g(x, u)$ is the reaction and relates to source and loss processes.
Typically, in chemical and biological applications, the reaction term $g(x, u)$ has a polynomial
form with respect to the unknown concentration denoted by $u$.

The analysis of the double phase eigenvalue problem \eqref{eque} is closely associated with the following single phase quasilinear eigenvalue problem,
\begin{align} \label{initial}
-\Delta_r^a u =\mu m(x)|u|^{r-2}u \quad \mbox{in} \,\, \R^N.
\end{align}
The first part of the paper is devoted to the study of \eqref{initial}. The main results we establish regarding \eqref{initial} are upcoming Theorem \ref{thm0} and Proposition \ref{nodal}, which reveal that there exist a sequence of eigenvalues to \eqref{initial} and the first eigenvalue is simple. In the case of bounded domains and $r=2$, this problem is related to the Riesz-Fredholm theory of self-adjoint and compact operators. The anisotropic linear case (if $r=2$ and $m(\cdot)$ is non-constant) was first considered in the pioneering papers of Bocher \cite{p1}, Hess and Kato \cite{p2} and Pleijel \cite{p3}. An important contribution in the case of unbounded domains is due to Allegretto and Huang \cite{AH} and Szulkin and Willem \cite{p4}. In \cite{p4}, the authors assumed that weight function may have singular points.

Equation \eqref{eque} contains the contribution of two differential operators in the left-hand side, so this problem is not homogeneous. In fact, the differential operator $u\mapsto -\Delta_p^a u-\Delta_q u$ is related to the  ``double-phase" variational functional defined by
$$
u\mapsto \int_{\R^N} a(x)|\nabla u|^p+|\nabla u|^q \,dx.
$$
The integrand of this functional is the function
$$
\xi(x,t)=a(x)t^p+t^q\ \ \mbox{for all}\ x\in\R^N \mbox{and}\ t\geq 0.
$$
When $a\equiv 1$, then \eqref{eque} becomes the so-called $p$ \& $q$ Laplacian problem, which was investigated by Benouhiba and Belyacine \cite{BB1, BB}. A feature of the present paper is that we do not assume that the function $a(\cdot)$ is bounded away from zero, that is, we do not require that $\displaystyle{\rm essinf}_{x\in \R^N}a(x)>0$. This implies that the integrand $\xi(x,t)$ exhibits unbalanced growth, namely there holds that
\begin{align} \label{ub}
t^q\leq\xi(x,t)\leq C_0(t^p+t^q)\ \mbox{for all $x\in\R^N$ and $t\geq 0$},
\end{align}
where $C_0>0$ is a constant. In this scenario, the study is carried out in the framework of Musielak-Orlicz-Sobolev spaces. Such functionals were first investigated by Marcellini \cite{marce1,marce2,marce3} in the context of problems of the calculus of variations and of nonlinear elasticity for strongly anisotropic materials. For such problems, there is no global (that is, up to the boundary) regularity theory. There are only interior regularity results, which are primarily due to Baroni et al. \cite{BCM} and {Marcellini \cite{CMM, marce3, marce4}}. In fact, most of works dealt with double phase problems having unbalanced growth in bounded domains of $\R^N$, we refer the readers to \cite{GP, GW, GLL, PPR, PRZ, PVV} and references therein. However, there exist relatively few ones treating the problems in $\R^N$. The study of eigenvalue problems like \eqref{eque} is open until now. Since \eqref{eque} is set in the whole space $\R^N$, then lack of compactness is one of major difficulties we encounter to discuss the eigenvalue problem \eqref{eque} in Musielak-Orlicz-Sobolev spaces and more careful analysis is needed in suitable weighted functions spaces. {Indeed, this is mainly because the embedding $W^{1, \xi}(\R^N)\hookrightarrow L^r(\R^N)$ is only continuous for any $q \leq r \leq q^*$ (see Lemma \ref{embedding2}) and the weight function $m: \R^N \to \R$ is indefinite, which cause that the verification of the compactness of the underlying (minimizing and Palasi-Smale) sequences becomes difficult. Consequently, we manage to study the problem \eqref{eque} in a new weighted Sobolev space $E$ defined by the completion of $C^{\infty}_0(\R^N)$ under the norm
$$
\|u\|_E:=\|\nabla u\|_{\xi} +\left(\int_{\R^N} |u|^q \max\{m_2, \omega\} \,dx\right)^{\frac 1q}, \quad \omega(x):=\frac{1}{(1+|x|)^q}, \quad x \in \R^N, 
$$ 
where $\|\cdot\|_{\xi}$ denotes the standard norm in $D^{1, \xi}(\R^N)$. Here $W^{1, \xi}(\R^N)$ and $D^{1, \xi}(\R^N)$ are Musielak-Orlicz-Sobolev spaces defined in Section 2.} In the present paper, when $p<q$, we establish the existence of a continuous family of eigenvalues to \eqref{eque}, starting from the principal frequency to \eqref{initial}, see Theorems \ref{nonexist} and \ref{thm1}. While $q<p$, we prove the existence of a discrete family of positive eigenvalues to \eqref{eque}, which diverges to infinity, see Theorem \ref{thm2} and Proposition \ref{simple1}. The results we derive reveal new facts of eigenvalues to double phase problems in $\R^N$. {In both cases, we actually need to assume that $q<q^*:=\frac{Nq}{N-q}$, because of the unbalanced growth property \eqref{ub} with respect to the double phase operator and the dominance is the $q$-Laplacian term. Thus the problem under consideration is Sobolev subcritical and the energy functional $J$ corresponding to \eqref{eque} is well-defined in the Sobolev space $E$ by Theorem \ref{embedding2}, where
$$
J(u):=\frac 1p \int_{\R^N} a(x)|\nabla u|^p\,dx + \frac 1q \int_{\R^N} |\nabla u|^q \, dx-\frac{\lambda}{q} \int_{\R^N} m(x)|u|^q \,dx.
$$
Observe that $\frac{p}{q}<1+\frac 1 N$ implies that $p<q^*$. When double phase problems are set in bounded domains in $\R^N$, then the condition $\frac{p}{q}<1+\frac 1 N$ can be applied to prove the desired compact embedding results, for example \cite[Proposition 4]{PPR}. While double phase problems are set in $\R^N$, then the condition $\frac{p}{q}<1+\frac 1 N$ can no longer be applicable to derive the compact embedding results, which leads to lack of compactness for the study. In the present paper, such a condition is actually used to guarantee the regularity of solutions to \eqref{eque} (see \cite{CS, CM}), which along with the maximum principle developed in \cite{PRZ, PVV} can lead to the simplicity of eigenvalues, see Proposition \ref{simple1}. 
}

\section{Preliminaries}

In the section, we are going to present some preliminary results used to establish our main theorems. To deal with the eigenvalue problem \eqref{eque}, we shall work in the corresponding Musielak-Orlicz-Sobolev space. For the convenience of the readers, let us first present a few definitions from \cite[Section 2]{DHHR} concerning the main notions and function spaces used in this paper.

\begin{defi}
A function $\varphi : [0, +\infty] \to [0, +\infty)$ is called a $\Phi$-function if $\varphi$ is convex and left-continuous on $[0, +\infty)$. In addition, $\varphi$ satisfies that 
$$
\varphi(0)=0, \quad \lim_{t \to 0^+} \varphi(t)=0, \quad \lim_{t \to +\infty} \varphi(t)=+\infty.
$$
\end{defi}

\begin{defi}
A function $\xi :\R^N \times [0, +\infty] \to [0, +\infty)$ is called a generalized $\Phi$-function if it satisfies the following conditions:
\begin{itemize}
\item [$(\textnormal{i})$] for almost every $x \in \R^N$, $\xi(x, \cdot)$ is a $\Phi$-function;
\item [$(\textnormal{ii})$] for almost every $t \in [0, +\infty)$, $\xi(\cdot, t)$ is measurable.
\end{itemize}
\end{defi}

\begin{defi}
A generalized $\Phi$-function $\xi :\R^N \times [0, +\infty] \to [0, +\infty)$ satisfies $\Delta_2$-condition if there exists $K \geq 2$ such that, for almost every $x \in \R^N$ and $t \geq 0$,
$$
\xi(x, 2t) \geq K \xi(x, t).
$$
\end{defi}

\begin{defi}
A $\Phi$-function $\varphi : [0, +\infty] \to [0, +\infty)$ is said to be a $N$-function if it is continuous and positive on $[0, +\infty)$. In addition, it satisfies that
$$
\lim_{t \to 0^+} \frac{\varphi(t)}{t}=0, \quad \lim_{t \to +\infty} \frac{\varphi(t)}{t}=+\infty.
$$
A generalized $\Phi$-function $\xi :\R^N \times [0, +\infty] \to [0, +\infty)$ is said to be a generalized $N$-function if, for almost every $x \in \R^N$, $\xi(x, \cdot)$ is a $N$-function.
\end{defi}

\begin{defi}
A generalized $N$-function $\xi :\R^N \times [0, +\infty] \to [0, +\infty)$ is called uniformly convex if, for any $\eps>0$, there exists $\delta>0$ such that, for almost every $x \in \R^N$,
$$
\xi\left(x, \frac{s+t}{2}\right) \leq (1-\delta) \frac{\xi(x,s)+\xi(x, t)}{2},
$$
whenever $s, t \geq 0$ and $|x-t| \geq \eps \max \left\{|s|, |t|\right\}$.
\end{defi}

With these definitions in hand, we are now ready to introduce the double phase function $\xi: \R^N \times [0, +\infty) \to  [0, +\infty)$ corresponding to \eqref{eque} as 
\begin{align} \label{dpf}
\xi(x, t):=a(x) t^p +t^q, \quad x \in \R^N, \, t \geq 0.
\end{align} 
It is simple to check that $\xi$ is a generalized $N$-function. Moreover, $\xi$ is uniformly convex and it satisfies the $\Delta_2$-condition. Let us denote by $M(\R^N)$ the space consisting of all Lebesgue measurable function $u :\R^N \to \R$.  The Musielak-Orlicz space $L^{\xi}(\R^N)$ is defined by
$$
L^{\xi}(\R^N):= \left\{ u \in M(\R^N) : \rho_{\xi}(u)<+\infty \right\},
$$
where $\rho_{\xi}$ is the modular function given by
\begin{align} \label{nr}
\rho_{\xi}(u):=\int_{\R^N} \xi(x, |u|) \,dx =\int_{\R^N} a(x) |u|^p+|u|^q \, dx.
\end{align}
Here the space $L^{\xi}(\R^N)$ is equipped with the Luxemburg norm given by
\begin{align} \label{nxi}
\|u\|_{\xi}:=\inf \left\{\lambda>0 : \rho_{\xi}\left(\frac{u}{\lambda}\right) \leq 1\right\}.
\end{align}
Using the above properties satisfied by $\xi$, we can easily check that $L^{\xi}(\R^N)$ is a Banach space, which is also separable and reflexive. The Musielak-Orlicz-Sobolev space $W^{1, \xi}(\R^N)$ is defined by
$$
W^{1,\xi}(\R^N):=\left\{ u \in L^{\xi}(\R^N) : |\nabla u| \in L^{\xi}(\R^N)\right\}.
$$
Here the space $W^{1,\xi}(\R^N)$ is equipped with the norm
$$
\|u\|_{1, \xi}:=\|u\|_{\xi} + \|\nabla u\|_{\xi},
$$
where $\|\nabla u\|_{\xi}:=\||\nabla u|\|_{\xi}$. Clearly, $W^{1, \xi}(\R^N)$ is a separable, reflexive Banach space. Let us introduce the associated homogeneous Musielak-Orlicz-Sobolev $D^{1,\xi}(\R^N)$ as the completion of $C_0^{\infty}(\R^N)$ under the norm $\|\nabla u\|_{\xi}$.


Next we are going to show some relations between the norm in $L^{\xi}(\R^N)$ and the modular function $\rho_{\xi}$ given by \eqref{nr} and \eqref{nxi} respectively, proofs of which can be completed by using the ingredients presented in \cite[Section 3.2]{HH}. 

\begin{lem} \label{order}
Let $\xi : \R^N \times [0, +\infty) \to [0, +\infty)$ be defined by \eqref{dpf}. Then the following assertions hold.
\begin{itemize}
\item[$(\textnormal{i})$] $\|u\|_{\xi}=\lambda $ if and only if $\rho_{\xi} \left( \frac{u}{\lambda}\right)=1$.
\item[$(\textnormal{ii})$] $\|u\|_{\xi}<1(=1, >1, respectively)$ if and only if $\rho_{\xi}(u)<1 (=1, >1, respectively)$.
\item[$(\textnormal{iii})$] If $\|u\|_{\xi}<1$, then $\|u\|^{\max\{p, q\}}_{\xi} \leq \rho_{\xi}(u) \leq \|u\|_{\xi}^{\min\{p, q\}}$.
\item[$(\textnormal{iv})$] If $\|u\|_{\xi}>1$, then $\|u\|^{\min\{p, q\}}_{\xi} \leq \rho_{\xi}(u) \leq \|u\|_{\xi}^{\max\{p, q\}}$.
\item[$(\textnormal{v})$]  $\lim_{n \to +\infty} \|u_n\|_{\xi}=0 (+ \infty, respectively)$ if and only if $\lim_{n \to +\infty} \rho_{\xi}(u_n)=0(+\infty, respectively)$.
\end{itemize} 

\end{lem}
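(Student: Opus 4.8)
The plan is to exploit the explicit two-power structure of the double phase modular. For every $s>0$ formula \eqref{nr} gives
$$
\rho_\xi(su)=s^p\int_{\R^N} a(x)|u|^p\,dx+s^q\int_{\R^N}|u|^q\,dx,
$$
so the scaling of $\rho_\xi$ is entirely governed by the two exponents. Comparing $s^p$ and $s^q$ with $s^{\min\{p,q\}}$ and $s^{\max\{p,q\}}$ yields the elementary bounds
$$
s^{\max\{p,q\}}\rho_\xi(u)\le \rho_\xi(su)\le s^{\min\{p,q\}}\rho_\xi(u),\qquad 0<s\le 1,
$$
$$
s^{\min\{p,q\}}\rho_\xi(u)\le \rho_\xi(su)\le s^{\max\{p,q\}}\rho_\xi(u),\qquad s\ge 1,
$$
which will serve as the engine for all five assertions. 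Note also that for $u\in L^\xi(\R^N)$ both integrals above are finite, whence $\rho_\xi(su)\le\max\{s^p,s^q\}\,\rho_\xi(u)<+\infty$ for every $s>0$; this finiteness reflects the $\Delta_2$-type property of $\xi$ and here is immediate from the polynomial form.

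To prove (i) I would fix $u\not\equiv 0$ and study the auxiliary function $f(\lambda):=\rho_\xi(u/\lambda)=\lambda^{-p}\int_{\R^N}a(x)|u|^p\,dx+\lambda^{-q}\int_{\R^N}|u|^q\,dx$ for $\lambda>0$. Since $u\not\equiv 0$ gives $\int_{\R^N}|u|^q\,dx>0$, the function $f$ is continuous and strictly decreasing on $(0,+\infty)$, with $f(\lambda)\to+\infty$ as $\lambda\to 0^+$ (driven by the $q$-term) and $f(\lambda)\to 0$ as $\lambda\to+\infty$. By the intermediate value theorem there is a unique $\lambda^\ast$ with $f(\lambda^\ast)=1$, and by the very definition of the Luxemburg norm \eqref{nxi} this $\lambda^\ast$ equals $\|u\|_\xi$; the case $u\equiv 0$ being trivial, this establishes (i).

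Assertions (ii)–(iv) then follow by evaluating $f$ and the scaling bounds at $\lambda_0:=\|u\|_\xi$. For (ii) I note $\rho_\xi(u)=f(1)$ while $f(\lambda_0)=1$, so the strict monotonicity of $f$ forces $f(1)<f(\lambda_0)=1$ when $\|u\|_\xi<1$, with the equality and the $>1$ cases handled symmetrically. For (iii) and (iv) I would write $u=\lambda_0\,(u/\lambda_0)$ and use $\rho_\xi(u/\lambda_0)=1$ from (i): if $\lambda_0<1$, inserting $s=\lambda_0$ into the first scaling chain gives $\|u\|_\xi^{\max\{p,q\}}\le\rho_\xi(u)\le\|u\|_\xi^{\min\{p,q\}}$, while if $\lambda_0>1$ the second chain gives $\|u\|_\xi^{\min\{p,q\}}\le\rho_\xi(u)\le\|u\|_\xi^{\max\{p,q\}}$. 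Finally, (v) is a direct consequence of (ii)–(iv): if $\|u_n\|_\xi\to 0$ then eventually $\|u_n\|_\xi<1$ and (iii) yields $\rho_\xi(u_n)\le\|u_n\|_\xi^{\min\{p,q\}}\to 0$; conversely $\rho_\xi(u_n)\to 0$ forces $\|u_n\|_\xi<1$ via (ii), and then (iii) gives $\|u_n\|_\xi^{\max\{p,q\}}\le\rho_\xi(u_n)\to 0$, with the analogous argument using (iv) disposing of the $+\infty$ case.

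The only genuinely delicate point is the continuity and strict monotonicity of $f$ underlying (i), in particular the finiteness of the modular and its limiting behaviour as $\lambda\to 0^+$ and $\lambda\to+\infty$; this is exactly where the $N$-function and $\Delta_2$ properties of $\xi$ enter, although in the present setting they are made transparent by the explicit sum of powers. Once (i) is secured, all remaining assertions are routine bookkeeping with the exponent inequalities, and the argument may equivalently be read off from the general machinery in \cite[Section 3.2]{HH}.
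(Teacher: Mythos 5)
Your proof is correct. Note that the paper does not actually write out a proof of Lemma \ref{order}: it only remarks that the proofs ``can be completed by using the ingredients presented in \cite[Section 3.2]{HH}'', i.e., it delegates to the general norm--modular machinery for generalized Orlicz spaces. Your argument reconstructs that machinery in a self-contained, elementary way by exploiting the explicit two-power form of the modular: the sandwich $s^{\max\{p,q\}}\rho_{\xi}(u)\leq\rho_{\xi}(su)\leq s^{\min\{p,q\}}\rho_{\xi}(u)$ for $0<s\leq 1$ (reversed for $s\geq 1$) is precisely the double-phase specialization of the almost-homogeneity (growth/decay) conditions that drive the corresponding statements in Harjulehto--H\"ast\"o, and your auxiliary function $f(\lambda)=\rho_{\xi}(u/\lambda)$, which is continuous and strictly decreasing because $\int_{\R^N}|u|^q\,dx>0$ whenever $u\not\equiv 0$, replaces the abstract ``unit ball property'' with an intermediate-value argument that pins down the Luxemburg norm as the unique $\lambda^{\ast}$ with $f(\lambda^{\ast})=1$. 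What the general route buys is applicability to arbitrary generalized $\Phi$-functions where no such explicit scaling exists; what your route buys is transparency, and all five assertions indeed reduce to exponent bookkeeping exactly as you carry out, including the correct two-sided implications in (ii) and (v). Two minor points to record if you write this up: item (i) should be read with $u\neq 0$ and $\lambda>0$ (as you note, $u\equiv 0$ is trivial), and in (iii)--(iv) the case $u=0$ holds vacuously since both $\|u\|_{\xi}$ and $\rho_{\xi}(u)$ vanish.
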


Note that $t^q \leq \xi(x,t)$ for any $x \in \R^N$ and $t \in \R$, by the assertion $(\textnormal{ii})$ of Lemma \ref{order}, then there holds the following embedding result.

\begin{lem} \label{embedding1}
Let $\xi : \R^N \times [0, +\infty) \to [0, +\infty)$ be defined by \eqref{dpf}.
Then the embedding $L^{\xi}(\R^N) \hookrightarrow L^q(\R^N)$ is continuous. 
\end{lem}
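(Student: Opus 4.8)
The plan is to establish the embedding by proving the single quantitative estimate $\|u\|_{L^q(\R^N)} \le \|u\|_{\xi}$ for every $u \in L^{\xi}(\R^N)$; since the inclusion $L^{\xi}(\R^N) \hookrightarrow L^q(\R^N)$ is a linear map, this bound is exactly continuity, with embedding constant equal to $1$. The starting point is the elementary pointwise inequality $t^q \le \xi(x,t) = a(x)t^p + t^q$, valid for all $x \in \R^N$ and $t \ge 0$ because $a \ge 0$. Inserting $t = |u(x)|$ and integrating over $\R^N$ gives
\begin{equation*}
\int_{\R^N} |u|^q \, dx \le \int_{\R^N} \xi(x, |u|) \, dx = \rho_{\xi}(u),
\end{equation*}
so that $\|u\|_q^q \le \rho_{\xi}(u)$ holds for every $u$.

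Next I would dispose of the trivial case $u = 0$ and then normalize: for $u \ne 0$ set $\lambda := \|u\|_{\xi} > 0$ and $v := u/\lambda$, so that $\|v\|_{\xi} = 1$. By the norm--modular correspondence of Lemma \ref{order}, assertion $(\textnormal{ii})$ (or equivalently $(\textnormal{i})$), the equality $\|v\|_{\xi} = 1$ forces $\rho_{\xi}(v) = 1$. Applying the integrated inequality above to $v$ then yields $\|v\|_q^q \le \rho_{\xi}(v) = 1$, hence $\|v\|_q \le 1$. Unwinding the scaling, $\|u\|_q = \lambda \|v\|_q \le \lambda = \|u\|_{\xi}$, which is precisely the claimed bound.

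Since the whole argument merely transfers the pointwise growth inequality $t^q \le \xi(x,t)$ through the modular $\rho_{\xi}$, there is essentially no analytic obstacle; the only delicate point is the passage between the Luxemburg norm and the modular, which is supplied verbatim by Lemma \ref{order}. It is worth emphasizing that no lower bound on $a$ is invoked---only $a \ge 0$---so the embedding persists in the degenerate regime $\mathrm{essinf}\, a = 0$, in line with the unbalanced growth framework \eqref{ub} of the paper.
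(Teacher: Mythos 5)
Your proof is correct and follows essentially the same route as the paper, which likewise deduces the embedding from the pointwise inequality $t^q \leq \xi(x,t)$ combined with the norm--modular correspondence of Lemma \ref{order}$(\textnormal{ii})$. Your version merely makes the scaling argument explicit, yielding the quantitative bound $\|u\|_{q} \leq \|u\|_{\xi}$ with embedding constant $1$, which the paper leaves implicit.
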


As a consequence of Lemma \ref{embedding1} and Sobolev's embeddings in $W^{1,q}(\R^N)$ and $D^{1,q}(\R^N)$ for $1<q<N$, we have the following embedding result.

\begin{lem} \label{embedding2}
Let $\xi : \R^N \times [0, +\infty) \to [0, +\infty)$ be defined by \eqref{dpf}. Then the embedding $W^{1, \xi}(\R^N)\hookrightarrow W^{1,q}(\R^N) \hookrightarrow L^r(\R^N)$ is continuous for any $q \leq r \leq q^*$. Moreover, the embedding $D^{1,\xi}(\R^N) \hookrightarrow D^{1,q}(\R^N) \hookrightarrow L^{q^*}(\R^N)$ is continuous.
\end{lem}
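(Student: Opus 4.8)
The plan is to reduce the Musielak--Orlicz--Sobolev embedding to the classical Sobolev embedding for $W^{1,q}(\R^N)$ via the elementary inequality $t^q \leq \xi(x,t)$, which has already been exploited in Lemma \ref{embedding1}. First I would observe that the continuity of the embedding $L^{\xi}(\R^N) \hookrightarrow L^q(\R^N)$ from Lemma \ref{embedding1} applies simultaneously to a function $u$ and to its gradient $|\nabla u|$, because the lemma is a statement about the modular $\rho_\xi$ and the norm $\|\cdot\|_\xi$ applied to an arbitrary element of $L^{\xi}(\R^N)$. Concretely, for $u \in W^{1,\xi}(\R^N)$ both $u$ and $|\nabla u|$ lie in $L^{\xi}(\R^N)$, and Lemma \ref{embedding1} gives a constant $\constant>0$ (call it $C$) with $\|u\|_q \leq C \|u\|_{\xi}$ and $\|\nabla u\|_q \leq C \|\nabla u\|_{\xi}$. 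Adding these and comparing with the defining norms $\|u\|_{1,\xi} = \|u\|_\xi + \|\nabla u\|_\xi$ and $\|u\|_{1,q} = \|u\|_q + \|\nabla u\|_q$ yields $\|u\|_{1,q} \leq C \|u\|_{1,\xi}$, which is exactly the continuity of $W^{1,\xi}(\R^N) \hookrightarrow W^{1,q}(\R^N)$.

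Next I would invoke the classical Sobolev embedding theorem: for $1<q<N$, the space $W^{1,q}(\R^N)$ embeds continuously into $L^r(\R^N)$ for every $q \leq r \leq q^* = \frac{Nq}{N-q}$. Composing the two continuous embeddings gives $W^{1,\xi}(\R^N) \hookrightarrow W^{1,q}(\R^N) \hookrightarrow L^r(\R^N)$ for all such $r$, establishing the first assertion.

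For the homogeneous spaces, the argument is the same at the level of gradients. Since $D^{1,\xi}(\R^N)$ is the completion of $C_0^\infty(\R^N)$ under $\|\nabla u\|_\xi$ and $D^{1,q}(\R^N)$ the completion under $\|\nabla u\|_q$, the inequality $\|\nabla u\|_q \leq C \|\nabla u\|_\xi$ (valid on the dense subspace $C_0^\infty(\R^N)$ by Lemma \ref{embedding1} applied to $|\nabla u|$) extends by density to a continuous inclusion $D^{1,\xi}(\R^N) \hookrightarrow D^{1,q}(\R^N)$. Finally, the homogeneous Sobolev inequality $\|u\|_{q^*} \leq C \|\nabla u\|_q$ for $u \in D^{1,q}(\R^N)$ supplies the last link $D^{1,q}(\R^N) \hookrightarrow L^{q^*}(\R^N)$. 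I do not anticipate a genuine obstacle here; the only point requiring minor care is the density argument in the homogeneous case, namely checking that the inclusion $C_0^\infty(\R^N) \hookrightarrow L^{q^*}(\R^N)$ extends to the $D^{1,\xi}$-completion, which follows once the gradient estimate is in place.
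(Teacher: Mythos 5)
Your proposal is correct and follows exactly the route the paper takes: the paper derives Lemma \ref{embedding2} directly ``as a consequence of Lemma \ref{embedding1} and Sobolev's embeddings in $W^{1,q}(\R^N)$ and $D^{1,q}(\R^N)$,'' which is precisely your reduction via $t^q \leq \xi(x,t)$ applied to both $u$ and $|\nabla u|$, followed by the classical (respectively homogeneous) Sobolev embedding. Your writeup merely makes explicit the details (the norm comparison $\|u\|_{1,q} \leq C\|u\|_{1,\xi}$ and the density argument for the completions) that the paper leaves implicit, so there is nothing to flag.
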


\section{Main results}

In this section, we shall consider the eigenvalue problem \eqref{eque} under the assumption $(H)$. The hypothesis $(H)$ is always assumed to hold in what follows. First we shall present some results related to the following eigenvalue problem,
\begin{align} \label{eque1}
-\Delta_r^a u =\mu m(x)|u|^{r-2}u \quad \mbox{in} \,\, \R^N.
\end{align}

\begin{thm}  \label{thm0} 
Assume that $(H)$ holds, ${N \geq 2}$, ${1<r<N}$, ${a \in C^{0, 1}(\R^N, [0, +\infty))}$ and $a \not\equiv 0$. Then there exists a sequence of solutions $(\mu_{a,r,k}, u_{a,r,k}) \in \R \times D^{1, \eta}(\R^N)$ to \eqref{eque1} with $u_{a,r,k} \in \mathcal{M}$ and 
$$
0<\mu_{a,r,1}<\mu_{a,r,2} \leq \cdots \leq \mu_{a,r,k} \leq \cdots, \quad \lim_{k \to \infty} \mu_{a,r,k} \to +\infty \quad \mbox{as} \,\, k \to +\infty,
$$ 
where $\eta(x,t)=a(x)t^r$ for $x \in \R^N$ and $t \geq 0$,
$$
\mathcal{M}_r:=\left\{u \in D^{1, \eta}(\R^N) : \int_{\R^N} m(x)|u|^r \, dx =1\right\}.
$$
\end{thm}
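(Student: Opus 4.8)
The plan is to realize the eigenvalues of \eqref{eque1} as Ljusternik--Schnirelman critical values of the even energy $\Phi(u):=\int_{\R^N} a(x)|\nabla u|^r\,dx$ restricted to the symmetric constraint $\mathcal{M}_r$. First I would record that, since the Luxemburg norm of the $r$-homogeneous $N$-function $\eta(x,t)=a(x)t^r$ reduces to $\big(\int_{\R^N}a|\nabla u|^r\,dx\big)^{1/r}$, one has $\Phi(u)=\|u\|_{D^{1,\eta}}^r$; hence $\Phi\in C^1$ and is coercive on $D^{1,\eta}(\R^N)$. Writing $G(u):=\int_{\R^N} m(x)|u|^r\,dx$, we have $\langle G'(u),u\rangle=r\,G(u)=r\neq 0$ on $\mathcal{M}_r=G^{-1}(1)$, so $\mathcal{M}_r$ is a complete symmetric $C^1$-submanifold, and it is nonempty because $m_1\not\equiv 0$ lets us rescale a test function supported where $m_1>0$. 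A constrained critical point $u$ satisfies $\Phi'(u)=\mu\,G'(u)$, which is exactly the weak form of \eqref{eque1}; testing with $u$ and using $r$-homogeneity gives $\mu=\Phi(u)$, so every eigenvalue produced this way is strictly positive once $\inf_{\mathcal{M}_r}\Phi>0$ is known.

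The heart of the matter, and the step I expect to be the main obstacle, is the compactness needed to run the variational scheme, since \eqref{eque1} is posed on all of $\R^N$ with a degenerate principal part and an indefinite, possibly singular weight. I would verify the Palais--Smale condition for $\Phi|_{\mathcal{M}_r}$ as follows. A Palais--Smale sequence $(u_n)$ at level $c$ has $\Phi(u_n)$ bounded, hence $(u_n)$ is bounded in $D^{1,\eta}(\R^N)$ and $u_n\wto u$ along a subsequence, with the multipliers $\mu_n=\Phi(u_n)\to c$. The decisive input is that $m_1\in L^{N/r}(\R^N)\cap L^\infty(\R^N)$ makes $u\mapsto\int_{\R^N}m_1|u|^r\,dx$ sequentially weakly continuous (approximate $m_1$ in $L^{N/r}$ by bounded compactly supported weights and use the local compactness of the Sobolev embedding to kill the tails). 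The term carrying $m_2\in L^\infty$ is only weakly lower semicontinuous, but it enters $G$ with the favorable sign: combining the elementary inequality $|b|^{r-2}b(b-a)\ge \tfrac1r(|b|^r-|a|^r)$ with weak lower semicontinuity yields $\limsup_n\langle G'(u_n),u_n-u\rangle\le 0$. Since $c>0$ on the relevant levels, this forces $\limsup_n\langle \Phi'(u_n),u_n-u\rangle\le 0$, and the $(S_+)$ property of the weighted operator $-\Delta_r^a$ (a consequence of the uniform convexity of $\eta$) upgrades $u_n\wto u$ to strong convergence in $D^{1,\eta}(\R^N)$. This simultaneously gives $u\in\mathcal{M}_r$ and the Palais--Smale condition.

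With compactness in hand I would run the standard symmetric minimax scheme: setting
$$
\mu_{a,r,k}:=\inf_{A\in\Sigma_k}\ \sup_{u\in A}\Phi(u),\qquad \Sigma_k:=\{A\subset\mathcal{M}_r:\ A\ \text{compact, symmetric},\ \gamma(A)\ge k\},
$$
with $\gamma$ the Krasnoselskii genus, the Ljusternik--Schnirelman theorem (deformation lemma, evenness of $\Phi$, and the Palais--Smale condition) makes each $\mu_{a,r,k}$ a critical value, hence an eigenvalue of \eqref{eque1} with eigenfunction $u_{a,r,k}\in\mathcal{M}_r$; the inclusions $\Sigma_{k+1}\subset\Sigma_k$ give the monotonicity. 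Every $\Sigma_k$ is nonempty because $m_1\not\equiv 0$ furnishes subspaces of arbitrary finite dimension on which $G$ is positive, so $\mathcal{M}_r$ has infinite genus. Positivity of $\mu_{a,r,1}=\inf_{\mathcal{M}_r}\Phi$ follows from the compactness above: a minimizing sequence with $\Phi(u_n)\to 0$ would converge to $0$ in $D^{1,\eta}$ and violate $G(u_n)=1$. The divergence $\mu_{a,r,k}\to+\infty$ is then the classical consequence that a nondecreasing sequence of genus minimax values of an even functional which is bounded below, unbounded above on $\mathcal{M}_r$, and satisfies the Palais--Smale condition cannot accumulate at a finite level, since such a level would carry, via the deformation lemma, a critical set of finite genus supporting all high-index minimax sets. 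Finally, the strict inequality $\mu_{a,r,1}<\mu_{a,r,2}$ is not delivered by the genus argument and would be obtained separately from the simplicity and isolation of the principal eigenvalue, through a Picone-type identity and the strong maximum principle, which is the content of Proposition \ref{nodal}.
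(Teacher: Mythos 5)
Your overall skeleton---genus minimax values of $\Phi(u)=\int_{\R^N}a(x)|\nabla u|^r\,dx$ on the symmetric constraint, with a Palais--Smale verification as the central step---is the same as the paper's, which simply runs Ljusternik--Schnirelman theory as in the proof of Theorem \ref{thm2}. But your proposal misses the one substantive idea in the paper's proof: following Allegretto--Huang \cite{AH}, the paper does \emph{not} work on $D^{1,\eta}(\R^N)$, but on the weighted space $\mathcal{V}$, the completion of $C_0^\infty(\R^N)$ under the norm $\|\nabla u\|_{\eta}+\bigl(\int_{\R^N}|u|^r\max\{m_2,\omega\}\,dx\bigr)^{1/r}$ with $\omega(x)=(1+|x|)^{-r}$, and it minimizes over $M_r=\mathcal{M}_r\cap\mathcal{V}$. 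This is not cosmetic. On $D^{1,\eta}(\R^N)$ the constraint functional $G(u)=\int_{\R^N}m(x)|u|^r\,dx$ is not even well defined: $m_2$ is merely in $L^\infty(\R^N)$ with no decay, and already in the non-degenerate case $a\equiv1$ there are $u\in D^{1,r}(\R^N)$ (e.g.\ $u(x)=(1+|x|)^{-\alpha}$ with $\frac{N-r}{r}<\alpha\le\frac{N}{r}$) lying in $L^{r^*}$ but not in $L^r$, so that $\int_{\R^N}m_2|u|^r\,dx=+\infty$. Hence $\mathcal{M}_r$ is not a $C^1$ submanifold of $D^{1,\eta}(\R^N)$, your appeal to weak lower semicontinuity of the $m_2$-term is vacuous (the term may be infinite along the sequence), and the Lagrange-multiplier framework never gets started. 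The extra weighted $L^r$-term in the norm of $\mathcal{V}$ is precisely what makes $G$ finite and $C^1$ and what allows the paper (via Hardy's inequality and $m_1\in L^{N/r}$, as in the proof of Lemma \ref{psc}) to bound Palais--Smale sequences in the full norm of the working space.

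A second, independent failure: your weak continuity claim for $u\mapsto\int_{\R^N}m_1|u|^r\,dx$ and your positivity argument for $\mu_{a,r,1}$ both rest on the embedding $D^{1,\eta}(\R^N)\hookrightarrow L^{r^*}(\R^N)$, used globally (H\"older against $\|m_1\|_{N/r}$ to kill tails) and locally compactly (for the truncated weights). Since the paper explicitly allows $\operatorname{essinf}_{\R^N}a=0$---indeed $a\in C^{0,1}$, $a\ge0$, $a\not\equiv0$ may vanish on open sets---the seminorm $\bigl(\int_{\R^N}a|\nabla u|^r\,dx\bigr)^{1/r}$ controls nothing where $a$ degenerates, and neither the global embedding nor local compactness is available; this is exactly the difference from the double phase space $D^{1,\xi}(\R^N)$, where the non-degenerate summand $t^q$ in $\xi$ yields Lemma \ref{embedding2}. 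So as written, your Palais--Smale argument, your $(S_+)$ step, and your claim that $\Phi(u_n)\to0$ contradicts $G(u_n)=1$ are unsupported for degenerate $a$. The remaining pieces are fine modulo this: your divergence argument via the finite genus of the compact critical set is a legitimate classical alternative to the paper's tail-subspace argument with $Z_k$ in Theorem \ref{thm2}, and deferring the strict inequality $\mu_{a,r,1}<\mu_{a,r,2}$ to the simplicity statement of Proposition \ref{nodal} matches the paper. But the weighted space $\mathcal{V}$ is the key device of this theorem, and without it several of your steps collapse.
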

\begin{proof}
Define 
$$
\Psi(u):=\int_{\R^N} a(x) |\nabla u|^r\, dx, \quad M_r:=\mathcal{M}_r \cap \mathcal{V},
$$
where the Sobolev space $\mathcal{V}$ is the completion of $C_0^{\infty}(\R^N)$ under the norm
$$
{\|\nabla u\|_{\eta} +\left(\int_{\R^N} |u|^r \max\{m_2, \omega\} \,dx\right)^{\frac 1r}, \quad \omega(x)=\frac{1}{(1+|x|)^r}, \quad x \in \R^N}.
$$
Reasoning as the proof of \cite[Lemma 1]{AH}, we are able to show that $\Psi(u)$ restricted on $M_r$ satisfies the Palais-Smale condition. Then, by adapting Ljusternik-Schnirelman theory as the proof of forthcoming Theorem \ref{thm2}, we can derive the desired conclusion. Thus the proof is completed.
\end{proof}



\begin{prop} \label{nodal}
Assume that $(H)$ holds, {$N \geq 2$}, ${1<r<N}$, ${a \in C^{0, 1}(\R^N, [0, +\infty))}$ and $a \not\equiv 0$. Then the first eigenvalue $\mu_{a,r,1}$ obtained in Theorem \ref{thm0} is simple and the eigenfunction $u_{a,r,1}$ has constant sign. Moreover, if $u \in D^{1,\eta}(\R^N)$ is a nontrivial solution to \eqref{eque1} corresponding to $\mu>\mu_{a,r,1}$, then $u$ is sign-changing.
\end{prop}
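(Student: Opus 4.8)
The plan is to prove Proposition \ref{nodal} in three parts: constant sign of the first eigenfunction, simplicity of $\mu_{a,r,1}$, and the sign-changing property of higher eigenfunctions.

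First I would establish that the first eigenfunction $u_{a,r,1}$ has constant sign. The key observation is that since $u \mapsto |u|$ does not increase the Dirichlet-type energy $\Psi(u) = \int_{\R^N} a(x)|\nabla u|^r\,dx$ (because $|\nabla |u|| = |\nabla u|$ a.e.) while preserving the constraint $\int_{\R^N} m(x)|u|^r\,dx = 1$, the minimizer can be taken nonnegative; indeed if $u_{a,r,1}$ is a minimizer then so is $|u_{a,r,1}|$. Thus we may assume $u_{a,r,1} \geq 0$. To upgrade this to strict positivity (constant sign with no zeros), I would invoke a Harnack inequality or strong maximum principle for the degenerate weighted $r$-Laplacian $-\Delta_r^a$; since $a \in C^{0,1}$ and $u_{a,r,1}$ solves $-\Delta_r^a u = \mu_{a,r,1} m(x)|u|^{r-2}u$, the regularity results and the maximum principle developed in \cite{PRZ, PVV} (referenced in the introduction) should give $u_{a,r,1}>0$ on the set where $a$ is nondegenerate.

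Second, for simplicity, suppose $u, v$ are both positive first eigenfunctions (normalized on $\mathcal{M}_r$). The standard tool here is the \emph{hidden convexity} / Picone-type inequality adapted to the weighted $r$-Laplacian: for the functional $\Psi$, the map $t \mapsto \Psi(((1-t)u^r + t\,v^r)^{1/r})$ is convex, with strict convexity unless $u$ and $v$ are proportional. Testing the equation for $u$ against $(u^r - v^r)/u^{r-1}$ and the equation for $v$ against $(v^r - u^r)/v^{r-1}$ and adding (the Díaz--Saá or Lindqvist argument) yields an inequality that forces $\nabla(u/v) = 0$, hence $u = cv$. Since both lie on $\mathcal{M}_r$, we get $c = 1$, proving simplicity. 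I expect this to be the main obstacle: the degeneracy of $a$ (possible vanishing) means the test functions $u^r/v^{r-1}$ may not lie in the energy space $\mathcal{V}$ and the Picone identity must be justified carefully, likely by truncation and an approximation argument using the density of $C_0^\infty(\R^N)$ together with the $\Delta_2$-condition for $\eta$.

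Third, for the sign-changing property of higher eigenfunctions, suppose $u$ solves \eqref{eque1} with $\mu > \mu_{a,r,1}$ and, for contradiction, that $u \geq 0$ (the case $u \leq 0$ being symmetric). Again using the strong maximum principle, $u>0$. Then one can test the first-eigenvalue equation against $u$ and the $\mu$-equation against $u_{a,r,1}$, or apply the Picone inequality to the pair $(u_{a,r,1}, u)$, to obtain
\begin{align*}
\mu_{a,r,1} \int_{\R^N} m(x)\,u_{a,r,1}^r\,dx \;\geq\; \Psi(u_{a,r,1}) \;\geq\; \mu \int_{\R^N} m(x)\,u_{a,r,1}^r\,dx,
\end{align*}
which, since $\int_{\R^N} m(x)\,u_{a,r,1}^r\,dx = 1 > 0$, forces $\mu_{a,r,1} \geq \mu$, contradicting $\mu > \mu_{a,r,1}$. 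Hence $u$ must change sign. The delicate points throughout are the applicability of the maximum principle and Picone-type inequalities in the degenerate, weighted, whole-space setting; these are where I would concentrate the rigorous justification, relying on the regularity and maximum-principle machinery cited in the introduction.
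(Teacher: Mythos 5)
Your proposal follows essentially the same route as the paper: pass to $|u|$ to obtain a nonnegative minimizer and invoke the maximum principle (the paper handles the indefinite weight by moving $\mu_{a,r,1} m_2|u|^{r-2}u$ to the left-hand side) for positivity, then use the Di\'az--Sa\'a/Picone-type testing with $(u^r-v^r)/u^{r-1}$ and $(v^r-u^r)/v^{r-1}$, which is precisely the paper's Lemma \ref{Iuv1} on $I(u,v)$, both for simplicity and for the sign-changing claim. The only cosmetic difference is in the last step, where the paper applies the symmetric quantity $I(u,u_{a,r,1})$ after normalizing $0<\int_{\R^N} m(x)|u|^r\,dx<1$ by scaling, while your one-sided Picone chain reaches the same contradiction $\mu\leq\mu_{a,r,1}$ directly; the argument is correct.
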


Since the function $m$ is an indefinite sign weight, then proof of Proposition \ref{nodal} is not straightforward. To prove this, we need the following auxiliary result.

\begin{lem} \label{Iuv1}
Define
$$
I(u, v):=-\int_{\R^N} \left(\Delta_r^a u \right) \frac{u^r-v^r}{u^{r-1}} \,dx - \int_{\R^N} \left(\Delta_r^a v \right) \frac{v^r-u^r}{v^{r-1}} \,dx, \quad u, v \in D^{1, \eta}(\R^N), u, v>0.
$$
Then $I(u, v) \geq 0$. Moreover, $I(u ,v)=0$ if and only if $u= kv$ for some $k \in \R$. 
\end{lem}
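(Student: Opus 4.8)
The plan is to reduce the claim to the weighted Picone identity in the spirit of Allegretto and Huang \cite{AH}. First I would read each of the two integrals defining $I(u,v)$ in its weak (gradient) form, so that after integrating by parts
\[
-\int_{\R^N}(\Delta_r^a u)\,\frac{u^r-v^r}{u^{r-1}}\,dx=\int_{\R^N}a(x)\,|\nabla u|^{r-2}\nabla u\cdot\nabla\!\left(\frac{u^r-v^r}{u^{r-1}}\right)dx,
\]
and symmetrically for the second term with the roles of $u$ and $v$ interchanged. Since $\frac{u^r-v^r}{u^{r-1}}=u-\frac{v^r}{u^{r-1}}$, the first right-hand side splits as $\int_{\R^N}a|\nabla u|^r\,dx-\int_{\R^N}a\,|\nabla u|^{r-2}\nabla u\cdot\nabla(v^r/u^{r-1})\,dx$.

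Next I would use the pointwise algebraic identity: for $u,v>0$ set
\[
L(u,v):=|\nabla u|^r+(r-1)\frac{u^r}{v^r}|\nabla v|^r-r\,\frac{u^{r-1}}{v^{r-1}}|\nabla v|^{r-2}\nabla v\cdot\nabla u,
\]
\[
R(u,v):=|\nabla u|^r-\nabla\!\left(\frac{u^r}{v^{r-1}}\right)\cdot|\nabla v|^{r-2}\nabla v .
\]
An elementary expansion of $\nabla(u^r/v^{r-1})$ shows $L(u,v)=R(u,v)$, while the convexity inequality $|\xi|^r\ge|\zeta|^r+r|\zeta|^{r-2}\zeta\cdot(\xi-\zeta)$ applied to $\xi=\nabla u$ and $\zeta=(u/v)\nabla v$ yields $L(u,v)\ge0$ pointwise, with equality precisely when $\nabla u=(u/v)\nabla v$, i.e. $\nabla(u/v)=0$. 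Rewriting the mixed term in the first integral through $R(v,u)$ turns it into $\int a|\nabla u|^r-\int a|\nabla v|^r+\int aR(v,u)\,dx$, and the symmetric computation gives $\int a|\nabla v|^r-\int a|\nabla u|^r+\int aR(u,v)\,dx$. Adding the two, the unsigned gradient terms cancel and I obtain the clean formula
\[
I(u,v)=\int_{\R^N}a(x)\big[L(u,v)+L(v,u)\big]\,dx .
\]
Because $a\ge0$ and both Picone integrands are nonnegative, $I(u,v)\ge0$ follows at once. For the equality statement, $I(u,v)=0$ forces $a\,L(u,v)=a\,L(v,u)=0$ a.e., hence $L(u,v)=0$ a.e. on $\{a>0\}$; the equality case above then gives $\nabla(u/v)=0$ there, that is $u=kv$ for a constant $k$. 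The reverse implication is trivial, since substituting $u=kv$ makes $L(kv,v)$ and $L(v,kv)$ vanish identically, so $I(kv,v)=0$.

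The main difficulty is not the algebra but its rigorous justification in the degenerate, unbounded setting. The competitor $v^r/u^{r-1}$ and its gradient need not, a priori, be globally admissible test functions against the weight $a$, so both the integration by parts and the pointwise Picone identity must first be established for truncated and approximating functions and then passed to the limit; here I would combine monotone and dominated convergence with the fact that the $D^{1,\eta}(\R^N)$-norm, where $\eta(x,t)=a(x)t^r$, controls exactly $\int_{\R^N}a|\nabla\cdot|^r\,dx$. The second delicate point is the equality analysis on the degenerate set $\{a=0\}$, where the norm detects nothing and Picone supplies no information: accordingly the identity $u=kv$ should be read on each connected component of $\{a>0\}$, i.e. essentially on the support of $a$, which is precisely the form in which it is used in the comparison arguments of Proposition \ref{nodal}, where $u$ and $v$ enter only through $a$-weighted quantities.
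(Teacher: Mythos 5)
Your proof is correct and follows essentially the same route as the paper: integrate by parts to obtain the expansion \eqref{Iuvn}, then show the integrand is pointwise nonnegative. The only technical difference is how positivity is certified: the paper estimates the cross terms by Cauchy--Schwarz plus Young's inequality, while you invoke the convexity inequality $|\xi|^r\geq|\zeta|^r+r|\zeta|^{r-2}\zeta\cdot(\xi-\zeta)$ with $\xi=\nabla u$, $\zeta=(u/v)\nabla v$ --- these are equivalent (Young's inequality for this pairing \emph{is} the convexity statement), though your version yields the equality case more directly, since strict convexity of $t\mapsto|t|^r$ for $r>1$ immediately forces $\nabla(u/v)=0$, whereas the paper passes through the alignment conditions and $|u\nabla v-v\nabla u|=0$. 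Two of your side remarks are genuine refinements that the paper's proof glosses over: first, the admissibility of $v^r/u^{r-1}$ as a test function is not automatic in $D^{1,\eta}(\R^N)$ and does require the truncation/approximation argument you sketch (e.g.\ working with $u+\varepsilon$ and passing to the limit); second, since $a$ may vanish on an open set, $I(u,v)=0$ only forces $L(u,v)=0$ a.e.\ on $\{a>0\}$, so the conclusion $u=kv$ is rigorously justified only on each connected component of $\{a>0\}$ --- your component-wise reading is the correct statement, and it suffices for the application in Proposition \ref{nodal}, where $u$ and $v$ enter only through $a$-weighted quantities.
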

\begin{proof}
Observe that
$$
\nabla \left( \frac{u^r-v^r}{u^{r-1}}\right)= \left(1+(r-1) \left(\frac v u\right)^r\right) \nabla u-r \left(\frac v u\right)^{r-1} \nabla v,
$$
$$
\nabla \left( \frac{v^r-u^r}{v^{r-1}}\right)= \left(1+(r-1) \left(\frac u v\right)^r\right) \nabla v-r \left(\frac u v\right)^{r-1} \nabla u.
$$
Then, by the divergence theorem, we see that
\begin{align} \label{Iuvn}
\begin{split}
I(u ,v)& =\int_{\R^N} a(x)\left(\left(1+(r-1) \left(\frac v u\right)^r\right) |\nabla u|^r-r \left(\frac v u\right)^{r-1} |\nabla u|^{r-2} \left( \nabla v \cdot \nabla u\right)\right) \,dx \\
& \quad + \int_{\R^N} a(x)\left(\left(1+(r-1) \left(\frac u v\right)^r\right) |\nabla v|^r-r \left(\frac u v\right)^{r-1} |\nabla v|^{r-2} \left( \nabla u \cdot \nabla v\right)\right) \,dx.
\end{split}
\end{align}
Using Young's inequality, we have that
$$
r \left(\frac v u\right)^{r-1} |\nabla u|^{r-2} \left( \nabla v \cdot \nabla u\right) \leq r \left(\frac v u\right)^{r-1} |\nabla u|^{r-1} |\nabla v|\leq (r-1) \left(\frac v u\right)^r |\nabla u|^r + |\nabla v|^r,
$$
$$
r\left(\frac u v\right)^{r-1} |\nabla v|^{r-2} \left( \nabla u \cdot \nabla v\right) \leq r \left(\frac u v\right)^{r-1} |\nabla v|^{r-1} |\nabla u|\leq (r-1) \left(\frac u v\right)^r |\nabla v|^r + |\nabla u|^r.
$$
As a consequence, coming back to \eqref{Iuvn}, we can conclude that $I(u, v) \geq 0$. If $I(u, v)=0$, then 
$$
\nabla u \cdot \nabla v=|\nabla u| |\nabla v|, \quad \left(\frac vu \right)^r |\nabla u|^r=|\nabla v|^r, \quad \left(\frac uv\right)^r |\nabla v|^r=|\nabla u|^r,
$$
It then follows that
$$
| u \nabla v-v\nabla u|=0.
$$
This implies that there exists $k \in \R$ such that $u=k v$ and the proof is completed.
\end{proof}

\begin{proof}[Proof of Proposition \ref{nodal}]
Note first that
$$
\mu_{a,r, 1}=\inf_{u \in \mathcal{M}_r} \Psi(u).
$$
If $u \in \mathcal{M}_r$ satisfies that $\Psi(u)=u_{a,r,1}$, then $|u| \in \mathcal{M}_r$ and $\Psi(|u|)=u_{a,r,1}$. Therefore, without restriction, we may assume that $u_{a,r,1}$ is nonnegative. Observe that $u_{a,r,1} \in D^{1,\eta}(\R^N)$ satisfies the equation
$$
-\Delta_r^a u_{a,r,1} +\mu_{a,r,1} m_2(x)|u_{a,r,1}|^{r-2}u_{a,r,1}=\mu_{a,r,1} m_1(x)|u_{a,r,1}|^{r-2}u_{a,r,1} \geq 0 \quad \mbox{in} \,\, \R^N.
$$
By maximum principle, then $u_{a,r,1}>0$. Let $u_{a,r,1} \in \mathcal{M}_r$ and $v_{a,r,1} \in \mathcal{M}_r$ be two positive eigenfunctions corresponding to $\mu_{a,r,1}$, then
$$
-\Delta_r^a u_{a,r,1}=\mu_{a,r,1} m(x)u_{a,r,1}^{r-1},\quad -\Delta_r^a v_{a,r,1}=\mu_{a,r,1} m(x)v_{a,r,1}^{r-1} \quad \mbox{in} \,\, \R^N.
$$
It is simple to calculate that $I(u_{a,r,1}, v_{a,r,1})=0$. As a result of Lemma \ref{Iuv1}, we have that $u_{a,r,1}=k v_{a,r,1}$ for some $k \in \R$. This indicate that $\mu_{a,r,1}$ is simple.

Arguing by contradiction, we suppose that $u \in D^{1,\eta}(\R^N)$ is a nonnegative solution to \eqref{eque1} corresponding to $\mu>\mu_{a,r,1}$. By the maximum principle, then $u>0$. Notice that
$$
\int_{\R^N} a(x)|\nabla u|^r \,dx =\mu \int_{\R^N} m(x) |u|^r \, dx>0.
$$
In addition, we know that if $u \in D^{1, \eta}(\R^N)$ is a solution to \eqref{eque1}, then $k u \in D^{1, \eta}(\R^N)$ is also a solution to \eqref{eque1} for any $k \in \R \backslash \{0\}$. Then, by scaling, we may assume that
\begin{align} \label{scaling}
0<\int_{\R^N} m(x) |u|^r \, dx<1.
\end{align}
Let $u_{a,r,1} \in \mathcal{M}$ and $u_{a,r,1}>0$ be an eigenfunction to \eqref{eque1} corresponding to $\mu_{a,r,1}$.
Then $u_{a,r,1}$ solves 
the equation
$$
-\Delta_r^a u_{a,r,1}=\mu_{a,r,1} m(x)|u_{a,r,1}|^{r-2}u_{a,r,1} \quad \mbox{in} \,\, \R^N.
$$
As a consequence of Lemma \ref{Iuv1} and \eqref{scaling}, we have thta
\begin{align*}
0 \leq I(u, u_{a,r,1})&=\mu \int_{\R^N} m(x) \left(u^r-u_{a,r,1}^r \right) \,dx+\mu_{a,r,1} \int_{\R^N} m(x) \left(u^r_{a,r,1}-u^r\right) \,dx\\
&=(\mu-\mu_{a,r,1})\int_{\R^N} m(x)|u|^r \,dx -(\mu-\mu_{a,r,1}) <0.
\end{align*}
This is impossible, hence $u$ is sign-changing and the proof is completed.
\end{proof}

\begin{thm} \label{nonexist}
Assume that $(H)$ holds, {$N \geq 2$}, {$1<p, q<N$, $p \neq q$}, ${a \in C^{0, 1}(\R^N, [0, +\infty))}$ and $a \not\equiv 0$. Then \eqref{eque} has no nontrivial solutions in $D^{1, \xi}(\R^N)$ for any $0 \leq \lambda \leq \mu_{1,q,1}$, where $\mu_{1,q,1}>0$ is the first eigenvalue to \eqref{eque1} with $a\equiv1$ and $r=q$.
\end{thm}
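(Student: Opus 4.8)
The plan is to test the weak formulation of \eqref{eque} against the solution itself and to play the resulting energy identity off against the variational characterization of the principal frequency $\mu_{1,q,1}$ of the single-phase problem \eqref{eque1} with $a\equiv1$, $r=q$. Suppose, for contradiction, that $u\in D^{1,\xi}(\R^N)\setminus\{0\}$ solves \eqref{eque} for some $\lambda\in[0,\mu_{1,q,1}]$. Taking $u$ as a test function yields
$$
\int_{\R^N} a(x)|\nabla u|^p\,dx + \int_{\R^N}|\nabla u|^q\,dx = \lambda \int_{\R^N} m(x)|u|^q\,dx. \qquad (\ast)
$$
On the other hand, Theorem \ref{thm0} and Proposition \ref{nodal} (with $a\equiv1$, $r=q$) give $\mu_{1,q,1}=\inf\{\int_{\R^N}|\nabla v|^q\,dx:\int_{\R^N} m|v|^q\,dx=1\}$, and after homogenization this produces the universal inequality
$$
\int_{\R^N}|\nabla v|^q\,dx \ge \mu_{1,q,1}\int_{\R^N} m(x)|v|^q\,dx, \qquad (\star)
$$
valid for every admissible $v$ (trivially when $\int_{\R^N} m|v|^q\le0$, and by the infimum otherwise); all integrals are finite by $(H)$ and the embeddings of Lemma \ref{embedding2}.

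First I would dispose of the case $\int_{\R^N} m|u|^q\le0$. Then the right-hand side of $(\ast)$ is nonpositive (as $\lambda\ge0$), while the left-hand side is $\ge\int_{\R^N}|\nabla u|^q\,dx\ge0$ since $a\ge0$. Hence $\int_{\R^N}|\nabla u|^q\,dx=0$, so $\nabla u\equiv0$; because $u\in D^{1,\xi}(\R^N)\hookrightarrow L^{q^*}(\R^N)$ and $L^{q^*}(\R^N)$ contains no nonzero constant, we obtain $u\equiv0$, a contradiction. Thus necessarily $\int_{\R^N} m|u|^q>0$.

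With $\int_{\R^N} m|u|^q>0$, combining $(\ast)$, $(\star)$ and $a\ge0$ gives
$$
\lambda \int_{\R^N} m|u|^q\,dx = \int_{\R^N} a|\nabla u|^p\,dx + \int_{\R^N}|\nabla u|^q\,dx \ge \int_{\R^N}|\nabla u|^q\,dx \ge \mu_{1,q,1}\int_{\R^N} m|u|^q\,dx,
$$
so $\lambda\ge\mu_{1,q,1}$. Together with $\lambda\le\mu_{1,q,1}$ this forces $\lambda=\mu_{1,q,1}$ and turns both displayed inequalities into equalities. In particular $\int_{\R^N} a|\nabla u|^p\,dx=0$ and $\int_{\R^N}|\nabla u|^q\,dx=\mu_{1,q,1}\int_{\R^N} m|u|^q\,dx$. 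The latter equality means that a scalar multiple of $u$ realizes the infimum defining $\mu_{1,q,1}$ and hence solves $-\Delta_q u=\mu_{1,q,1}m|u|^{q-2}u$; by Proposition \ref{nodal} (with $a\equiv1$, $r=q$) this principal eigenfunction is simple and of constant sign, so after normalization $u=u_{1,q,1}>0$. The former equality, since $a\ge0$ is continuous with $a\not\equiv0$, forces $\nabla u\equiv0$ on the nonempty open set $\{a>0\}$.

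The remaining, and principal, difficulty is to rule out this degenerate endpoint configuration: a positive principal $q$-eigenfunction whose gradient vanishes on the nonempty open set $\{a>0\}$. On each connected component of $\{a>0\}$ the function $u$ would be a positive constant, so that $-\Delta_q u=0$ there and the eigenvalue equation forces $m\equiv0$ on that component; thus $u$ would be $q$-harmonic and locally constant on an open set. I would close the argument by invoking the interior regularity theory for such equations (\cite{CS, CM}) together with the strong maximum principle of \cite{PRZ, PVV}, which prevent a positive principal eigenfunction of the single-phase problem from being locally constant on a nonempty open set (equivalently, forbid $\nabla u$ from vanishing identically on an open set); alternatively, globalizing the constancy by unique continuation would give a nonzero constant in $D^{1,\xi}(\R^N)\hookrightarrow L^{q^*}(\R^N)$, which is impossible, or through the already-established equality in $(\star)$ the absurd relation $\mu_{1,q,1}=0$, contradicting Theorem \ref{thm0}. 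This unique-continuation/strong-maximum-principle step for the degenerate weighted $q$-Laplacian is the technical crux; everything preceding it is elementary once the characterization $(\star)$ is in hand, and I expect it to be exactly where the regularity and maximum-principle tools emphasized in the introduction are indispensable.
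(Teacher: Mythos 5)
Up to the endpoint your argument reproduces the paper's proof: your identity $(\ast)$ is the paper's \eqref{ne1}, your homogenized Rayleigh-quotient inequality $(\star)$ is its \eqref{ne3}, and the dichotomy on the sign of $\int_{\R^N}m|u|^q\,dx$ together with the conclusion $\lambda\ge\mu_{1,q,1}$ for nontrivial $u$ is exactly how the paper disposes of $\lambda=0$ and $0<\lambda<\mu_{1,q,1}$. The divergence is at $\lambda=\mu_{1,q,1}$: there the paper combines \eqref{ne1} and \eqref{ne3} to get $\int_{\R^N}a(x)|\nabla u|^p\,dx\le 0$ and concludes $u=0$ at once, never entering the equality-case analysis you set up (identification of $u$ with the principal $q$-eigenfunction, constancy of $u$ on $\{a>0\}$, vanishing of $m$ there).

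In your version this endpoint step is a genuine gap, and you half-concede it by calling it ``the technical crux''. Concretely: the maximum principle of \cite{PRZ,PVV} excludes interior zeros of nonnegative solutions; it says nothing against a positive solution being locally constant --- a positive constant satisfies it vacuously --- so your first proposed closure fails as stated. The regularity theory of \cite{CS,CM} gives interior regularity, not unique continuation. What your route actually requires is a unique continuation property for $-\Delta_q u=\mu_{1,q,1}\,m|u|^{q-2}u$, namely that a solution with $\nabla u\equiv 0$ on a nonempty open set must be globally constant; for the $q$-Laplacian with $q\ne 2$ the equation is degenerate precisely where $\nabla u=0$, such unique continuation is a delicate and largely open matter, and it is contained in none of the references you invoke. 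Your own intermediate observation confirms the danger: $m\equiv 0$ on components of $\{a>0\}$ is perfectly compatible with hypothesis $(H)$, so no contradiction arises from the hypotheses alone along your path. Your scrutiny does expose that the paper's one-line implication ``$\int_{\R^N}a(x)|\nabla u|^p\,dx\le 0$, hence $u=0$'' is stated without the elaboration you attempt; but as written your proposed repair rests on tools that do not deliver it, so the case $\lambda=\mu_{1,q,1}$ is not established by your argument.
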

\begin{proof}
Let $u \in D^{1, \xi}(\R^N)$ be a solution to \eqref{eque} for some $0 \leq \lambda  \leq \mu_{1,q,1}$. Observe first that
\begin{align} \label{ne1}
\int_{\R^N} a(x) |\nabla u|^p \, dx +\int_{\R^N} |\nabla u|^q \,dx =\lambda \int_{\R^N} m(x) |u|^q \, dx.
\end{align}
This implies that $u=0$ if $\lambda =0$. Let us assume that $0 < \lambda<\mu_{1,q,1}$. Assume that $u \neq 0$, it then follows from \eqref{ne1} that
\begin{align} \label{ne2}
\int_{\R^N} m(x) |u|^q \, dx > 0.
\end{align}
In addition, since $\mu_{1,q,1}>0$ is the first eigenvalue to \eqref{eque1}, then
\begin{align} \label{ne3}
\int_{\R^N} |\nabla u|^q \,dx \geq \mu_{1,q,1} \int_{\R^N} m(x)|u|^q\, dx.
\end{align}
This along with \eqref{ne1} leads to
$$
\mu_{1,q,1} \int_{\R^N} m(x)|u|^q\, dx \leq \lambda \int_{\R^N} m(x)|u|^q\, dx.
$$
Using \eqref{ne2}, we then get that $u=0$. This is a contradiction. Next we assume that $\lambda=\mu_1$. In this case, by combining \eqref{ne1} and \eqref{ne3}, we obtain that
$$
\int_{\R^N} a(x) |\nabla u|^p \,dx \leq 0,
$$
hence $u=0$. Thus the proof is completed.
\end{proof}


\subsection{Case $p<q$}
In this case, to establish the existence of solutions to \eqref{eque}, we shall adapt some ideas from \cite{AH}. Let us first introduce the weight function 
$$
\omega(x)=\frac{1}{(1+|x|)^q}, \quad x \in \R^N.
$$
Let $E$ be the completion of $C_0^{\infty}(\R^N)$ under the norm
$$
\|u\|_E:=\|\nabla u\|_{\xi} +\left(\int_{\R^N} |u|^q \max\{m_2, \omega\} \,dx\right)^{\frac 1q}.
$$
It is standard to conclude that $E$ is a separable and reflexive Banach space. In order to prove the existence of solutions to \eqref{eque}, we shall define the associated energy functional $J: E \to \R$ by
$$
J(u):=\frac 1p \int_{\R^N} a(x)|\nabla u|^p\,dx + \frac 1q \int_{\R^N} |\nabla u|^q \, dx-\frac{\lambda}{q} \int_{\R^N} m(x)|u|^q \,dx.
$$


\begin{thm} \label{thm1}
Assume that $(H)$ holds, {$N \geq 2$}, $1<p<q<N$, ${a \in C^{0, 1}(\R^N, [0, +\infty))}$ and $a \not\equiv 0$. Then there exist positive solutions to \eqref{eque} for any $\lambda >\mu_{1,q,1}$.
\end{thm}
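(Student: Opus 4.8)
The plan is to obtain a solution as a \emph{rescaled constrained minimizer}, exploiting that for $p<q$ the $p$-term is of lower order and therefore plays the role of the quantity to be minimized, while the $q$-terms (which carry the spectral parameter $\lambda$) form the constraint. Concretely, I would set $\Psi(u):=\int_{\R^N}a(x)|\nabla u|^p\,dx$ and work on
\[
\mathcal{C}:=\left\{u\in E:\ K(u):=\lambda\int_{\R^N}m(x)|u|^q\,dx-\int_{\R^N}|\nabla u|^q\,dx=1\right\},
\]
putting $d:=\inf_{\mathcal{C}}\Psi$. Since $\lambda>\mu_{1,q,1}$ and $\mu_{1,q,1}$ is the infimum of the Rayleigh quotient $\int|\nabla u|^q/\int m|u|^q$, there is some $u$ with $\lambda\int m|u|^q>\int|\nabla u|^q$; as $K$ is $q$-homogeneous, scaling such a $u$ shows $\mathcal{C}\neq\emptyset$. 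The strategy is then: (i) produce a minimizer $u_0\in\mathcal{C}$ of $\Psi$; (ii) write its Euler--Lagrange equation with a Lagrange multiplier; (iii) rescale $u_0$ to absorb the multiplier and recover exactly \eqref{eque}; (iv) pass to a positive solution.

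The heart of the matter, and the main obstacle, is the compactness needed in step (i), precisely the lack of compactness caused by working on $\R^N$. First I would show that any minimizing sequence $(u_n)\subset\mathcal{C}$ is bounded in $E$ by a normalization argument: if $\|u_n\|_E\to\infty$, set $v_n:=u_n/\|u_n\|_E$; dividing $\Psi(u_n)\to d$ and the constraint $K(u_n)=1$ by the appropriate powers of $\|u_n\|_E$ forces $\int a|\nabla v_n|^p\to0$ and $K(v_n)\to0$. Extracting $v_n\rightharpoonup v$ in $E$ and using the compact embedding $E\hookrightarrow L^q(\R^N;m_1\,dx)$ (which follows from $m_1\in L^{\frac Nq}(\R^N)\cap L^{\infty}(\R^N)$ together with $D^{1,\xi}\hookrightarrow D^{1,q}\hookrightarrow L^{q^*}$, Lemma \ref{embedding2}), the weak lower semicontinuity of the convex gradient and $m_2$-terms, and the Hardy-type control $\int_{\R^N}\omega|u|^q\,dx\le C\int_{\R^N}|\nabla u|^q\,dx$, one checks that $v\equiv0$ is impossible (it would force $\|v_n\|_E\to0$), while $v\neq0$ with $\int a|\nabla v|^p=0$ forces $\nabla v=0$, hence $v=0$ since $v\in D^{1,q}(\R^N)$; both are contradictions, so $(u_n)$ is bounded. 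Then, along $u_n\rightharpoonup u_0$ in $E$, compactness gives $\int m_1|u_n|^q\to\int m_1|u_0|^q$, and lower semicontinuity of the remaining terms yields $K(u_0)\ge1$ and $\Psi(u_0)\le d$; rescaling $w_0:=su_0$ with $s:=K(u_0)^{-1/q}$ shows the excess must vanish (otherwise $w_0\in\mathcal{C}$ would satisfy $\Psi(w_0)<d$), so $u_0\in\mathcal{C}$ and $\Psi(u_0)=d$. In particular $u_0\neq0$, and since $a>0$ a.e.\ (the degeneracy being only $\mathrm{essinf}\,a=0$) and $u_0\in D^{1,q}(\R^N)$ is nontrivial, $d=\int a|\nabla u_0|^p>0$.

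With the minimizer in hand, the constraint is regular because $\langle K'(u_0),u_0\rangle=q\,K(u_0)=q\neq0$, so there is $\sigma\in\R$ with $\Psi'(u_0)=\sigma K'(u_0)$, that is
\[
-\Delta_p^a u_0-\tfrac{\sigma q}{p}\,\Delta_q u_0=\tfrac{\sigma q}{p}\,\lambda\, m(x)|u_0|^{q-2}u_0\quad\text{in }\R^N.
\]
Testing this identity with $u_0$ and using $u_0\in\mathcal{C}$ gives $p\,d=\sigma q$, i.e.\ $\sigma=\tfrac{p}{q}\,d$, so that $\tfrac{\sigma q}{p}=d$. Setting $w:=d^{1/(q-p)}u_0$ and using the $(p-1)$- and $(q-1)$-homogeneities of $\Delta_p^a$ and $\Delta_q$, a direct computation shows that $w$ solves $-\Delta_p^a w-\Delta_q w=\lambda m(x)|w|^{q-2}w$, which is exactly \eqref{eque}. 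Finally, replacing $u_0$ by $|u_0|$ changes neither $\Psi$ nor $K$ (as $|\nabla|u_0||=|\nabla u_0|$ a.e.), so one may take $u_0\ge0$; then $w\ge0$ is a nontrivial weak solution, and the strong maximum principle for the double phase operator (cf.\ \cite{PRZ,PVV}) upgrades this to $w>0$ in $\R^N$.

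I expect the delicate point to be the compactness in the second paragraph---establishing boundedness of minimizing sequences in the weighted space $E$ and the compact embedding into $L^q(m_1\,dx)$---since this is where the absence of a Rellich theorem on $\R^N$ and the possible vanishing of $a$ at infinity must be absorbed by the tailored norm $\|\cdot\|_E$, whose weight $\max\{m_2,\omega\}$ is designed precisely to control the part of the mass not seen by $\int|\nabla u|^q$.
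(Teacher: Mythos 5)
Your overall architecture --- minimize $\Psi(u)=\int_{\R^N}a(x)|\nabla u|^p\,dx$ on the fixed level set $\mathcal{C}=\{K(u)=1\}$ and absorb the Lagrange multiplier through the $p\neq q$ homogeneity mismatch --- is a genuinely different route from the paper, which minimizes $J$ on the Nehari manifold $\mathcal{N}=\{u\neq 0:\Psi(u)=K(u)\}$ (where $J=(\frac 1p-\frac 1q)\Psi$ on $\mathcal{N}$), extracts a nonnegative Palais--Smale sequence via Ekeland's principle and the projection $t_{|u|}|u|$, and passes to the limit in the PS equation. Much of your scheme is correct and in places cleaner: the multiplier algebra ($\sigma=\frac pq d$, $w=d^{1/(q-p)}u_0$) checks out; the ``$K(u_0)\ge 1$, then rescale and compare with $d$'' device legitimately replaces the PS analysis; taking $|u_0|$ instead of the paper's projection $t_{|u|}|u|$ works because both $\Psi$ and $K$ are invariant; and the convergence $\int m_1|u_n|^q\,dx\to\int m_1|u_0|^q\,dx$ along weak $E$-limits (via $|u_n|^q\rightharpoonup|u_0|^q$ in $L^{N/(N-q)}(\R^N)$ and $m_1\in L^{N/q}(\R^N)$), together with the Hardy control of the $\omega$-weighted term, are exactly the paper's own ingredients. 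The $v=0$ branch of your normalization argument closes correctly.

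The genuine gap is your claim that ``$a>0$ a.e.\ (the degeneracy being only $\mathrm{essinf}\,a=0$)''. This is not a hypothesis: the paper assumes only $a\in C^{0,1}(\R^N,[0,+\infty))$, $a\not\equiv 0$, and its stated point is precisely that $a$ may degenerate --- nothing prevents $a$ from vanishing on an open set. You use $a>0$ a.e.\ at the two load-bearing moments: in the boundedness argument (case $v\neq 0$, where $\int a|\nabla v|^p\,dx=0$ only gives $\nabla v=0$ on $\{a>0\}$) and, fatally, to get $d>0$, without which $w=d^{1/(q-p)}u_0=0$ and the construction produces nothing. Indeed, if $a\equiv 0$ on an open set $U$ meeting $\{m_1>0\}$ and $\lambda$ is large enough that some $u\in C_0^\infty(U)$ satisfies $\lambda\int m|u|^q\,dx>\int|\nabla u|^q\,dx$, then after scaling $u\in\mathcal{C}$ with $\Psi(u)=0$, so $d=0$ and your scheme collapses. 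This is exactly the point where the paper deploys machinery you have no substitute for: when the $p$-energy of the normalized sequence vanishes, it invokes the principal eigenvalue $\mu_{a,p,1}$ of the weighted problem \eqref{eque1} with $r=p$ and operator weight $a$ (Theorem \ref{thm0}), giving $\int m|v_n|^p\,dx\le\mu_{a,p,1}^{-1}\int a|\nabla v_n|^p\,dx\to 0$, and then interpolates with $q=\theta p+(1-\theta)q^*$ to contradict the normalization $\int m|v_n|^q\,dx=1$; in the complementary regime $\int m|u_n|^q\,dx\to\lambda_0>0$ it uses the Allegretto--Huang compactness lemma in the weighted space and Proposition \ref{nodal} (nontrivial solutions of \eqref{eque1} for $\mu>\mu_{1,q,1}$ must change sign) to exclude a nonnegative limiting eigenfunction. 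To repair your proof under the paper's actual hypotheses you would have to replace ``$a>0$ a.e.'' by this spectral input --- e.g.\ rule out $d=0$ via the $\mu_{a,p,1}$-plus-interpolation step --- at which point your argument essentially re-derives the paper's key lemma rather than bypassing it.
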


In this case, we find that $J$ is unbounded from below in $E$. Indeed, let $u_{1,q,1} \in D^{1, q}(\R^N)$ be an eigenfunction of \eqref{eque1} corresponding to $\mu_{1,q,1}$. We observe that
$$
J(tu_1)=\frac {t^p}{p} \int_{\R^N} a(x)|\nabla u_{1,q,1}|^p\,dx + \frac {t^q}{q} \left(1-\frac{\lambda}{\mu_{1,q,1}}\right)\int_{\R^N} |\nabla u_{1,q,1}|^q \, dx.
$$
Since $p<q$ and $\lambda>\mu_1$, then $J(tu_{1,q,1}) \to -\infty$ as $t \to +\infty$. In this situation, to seek for solutions to \eqref{eque}, we introduce the Nehari manifold
$$
\mathcal{N}:=\{ u \in E \backslash \{0\} : I(u)=0\},
$$
where
$$
I(u):=\int_{\R^N} a(x)|\nabla u|^p\,dx+\int_{\R^N} |\nabla u|^q\,dx-\lambda \int_{\R^N} m(x)|u|^q\,dx.
$$
Then we are able to define the minimization problem
\begin{align} \label{min}
m:=\inf_{u \in \mathcal{N}} J(u).
\end{align}
Obviously, any minimizer of \eqref{min} is a solution to \eqref{eque}.

\begin{proof} [Proof of Theorem \ref{thm1}] Let $\{u_n\} \subset \mathcal{N}$ be a minimizing sequence to \eqref{min}. Then $m=J(u_n)+o_n(1)$ and $I(u_n)=0$. Since $I(|u|) \leq I(u)=0$ for any $u \in \mathcal{N}$, then there exists a unique $0<t_{|u|} \leq 1$ such that $I(t_{|u|}|u|)=0$, where
$$
t_{|u|}=\left(\frac{\int_{\R^N} a(x)|\nabla |u||^p\,dx}{\lambda \int_{\R^N} m(x)|u|^q\,dx-\int_{\R^N} |\nabla |u||^q\,dx}\right)^{\frac{1}{q-p}}.
$$
Moreover, for any $u \in \mathcal{N}$, we see that
\begin{align} \label{mins}
J(u)=J(u)-\frac 1 q I(u)= \left(\frac 1p -\frac 1q\right) \int_{\R^N} a(x)|\nabla u|^p\,dx.
\end{align}
Therefore, for any $u \in \mathcal{N}$,  
$$
J(t_{|u|}|u|)=t_{|u|}^p\left(\frac 1p -\frac 1q\right) \int_{\R^N} a(x)|\nabla |u||^p\,dx \leq \left(\frac 1p -\frac 1q\right) \int_{\R^N} a(x)|\nabla u|^p\,dx =J(u).
$$
As a consequence, we shall assume that $\{u_n\} \subset \mathcal{N}$ is a nonnegative minimizing sequence to \eqref{min}. Otherwise, we can replace $\{u_n\}$ by $\{t_{|u_n|} |u_n|\}$ as a new minimizing sequence to \eqref{min}. 

First we are going to prove that $m>0$. It follows from \eqref{mins} that $m \geq 0$. Let us argue by contradiction that $m=0$. Then, by \eqref{mins}, we have that 
$$
\int_{\R^N} a(x)|\nabla u_n|^p\,dx=o_n(1).
$$
Let us first assume that
\begin{align} \label{a1}
\int_{\R^N} m(x)|u_n|^q\,dx=o_n(1).
\end{align}
Since $I(u_n)=0$, then there holds that
$$
\int_{\R^N} |\nabla u_n|^q\,dx=o_n(1).
$$
In this case, we set
\begin{align}\label{defvn}
v_n:=\frac{u_n}{\left(\int_{\R^N} m(x)|u_n|^q\,dx\right)^{\frac 1q}} \geq 0, \quad \forall \,\, n \in \mathbb{N}^+.
\end{align}
It is easy to see that $\{v_n\} \subset \mathcal{M}_q$. Since $I(u_n)=0$, then
\begin{align} \label{l1}
\int_{\R^N} a(x)|\nabla v_n|^p \,dx=\frac{\int_{\R^N} a(x) |\nabla u_n|^p \,dx}{\left(\int_{\R^N} m(x)|u_n|^q\,dx\right)^{\frac p q}}=\frac{1}{\left(\int_{\R^N} m(x)|u_n|^q\,dx\right)^{\frac p q-1}} \left(\lambda-\int_{\R^N} |\nabla v_n|^q \,dx\right).
\end{align}
In view of \eqref{a1} and \eqref{l1}, then
$$
\int_{\R^N} a(x)|\nabla v_n|^p \,dx=o_n(1).
$$
It then yields that
\begin{align} \label{c11}
\int_{\R^N} m(x)|v_n|^p \,dx\leq \frac{1}{\mu_{a,p,1}} \int_{\R^N} a(x)|\nabla v_n|^p \,dx =o_n(1).
\end{align}
Invoking H\"older's inequality, Sobolev's inequality and \eqref{c11}, we then get that
\begin{align*}
\int_{\R^N} m(x)|v_n|^q \, dx &\leq  \left(\int_{\R^N} m(x)|v_n|^p \,dx\right)^{\theta}  \left(\int_{\R^N} m(x)|v_n|^{q^*} \,dx\right)^{1-\theta}  \\
& \leq  \|m\|_{\infty}^{1-\theta}\left(\int_{\R^N} m(x)|v_n|^p \,dx\right)^{\theta}  \left(\int_{\R^N} |v_n|^{q^*} \,dx\right)^{1-\theta}  \\
& \leq C \|m\|_{\infty}^{1-\theta}\left(\int_{\R^N} m(x)|v_n|^p \,dx\right)^{\theta}  \left(\int_{\R^N} |\nabla v_n|^q \,dx\right)^{\frac{q^*(1-\theta)}{q}}=o_n(1),
\end{align*}
where $0<\theta<1$ and $q=\theta p +(1-\theta) q^*$. This is a contradiction, because of $v_n \in \mathcal{M}_q$.
Let us next assume that there exists some $\lambda_0>0$ such that
$$
\int_{\R^N} m(x)|u_n|^q\,dx=\lambda_0+o_n(1).
$$
Since $I(u_n)=0$, then
$$
\int_{\R^N} |\nabla v_n|^q\,dx=\lambda-\frac{\int_{\R^N} a(x) |\nabla u_n|^p \,dx}{\int_{\R^N} m(x)|u_n|^q\,dx}=\lambda+o_n(1).
$$
Therefore, there holds that $\|\nabla v_n\|_p^p=\lambda +o_n(1)$. In virtue of \cite[Lemma 1]{AH}, we then get that $\{v_n\}$ is compact in $V$, where the Sobolev space $V$ is the completion of $C_0^{\infty}(\R^N)$ under the norm
$$
\|\nabla u\|_q +\left(\int_{\R^N} |u|^q \max\{m_2, \omega\} \,dx\right)^{\frac 1q}.
$$
Let $v \in V$ be such that $v_n \to v$ in $V$ as $n \to \infty$, then $v \neq 0$ and $v \geq 0$. It then infers that $v \in V$ is a nonnegative eigenfunction to \eqref{eque1} corresponding to $\lambda$. By Lemma \ref{nodal}, we reach a contradiction, because of $\lambda>\mu_{1,q,1}$. 
As a consequence, we have that $m>0$. 

It is standard to show that $\mathcal{N}$ is a natural constraint. By the fact that there exists a nonnegative minimizing sequence to \eqref{min} and applying Ekeland's variational principle, then there exists a Palais-Smale sequence $\{u_n\} \subset E$ with $u_n^-=o_n(1)$ and $I(u_n)=o_n(1)$ for $J$ at the level $m>0$. Let us now prove that $\{u_n\}$ is bounded in $E$. Observe that 
\begin{align} \label{b1}
m+o_n(1)=J(u_n)-\frac 1q I(u_n)+o_n(1)=\left(\frac 1p-\frac 1q\right)\int_{\R^N} a(x) |u_n|^p \,dx.
\end{align}
Let us verify that $\{\|\nabla u_n\|_q\}$ is bounded. On the contrary, we may assume that $\|\nabla u_n\|_q \to + \infty$ as $n \to \infty$. Define $v_n$ by \eqref{defvn}, use the fact that $I(u_n)=o_n(1)$ and \eqref{b1}, then there holds that $\|\nabla v_n\|_p^p=\lambda +o_n(1)$. With the help of \cite[Lemma 1]{AH}, we can also reach a contradiction. This implies that $\{\|\nabla u_n\|_q\}$ is bounded. By Hardy's inequality, it then follows that
\begin{align*} 
\int_{\R^N} \frac{|u_n|^q}{(1+|x|)^q} \, dx\leq \left(\frac{p}{N-p}\right)^p \int_{\R^N} |\nabla u_n|^q \, dx \leq C.
\end{align*}
Notice that $I(u_n)=o_n(1)$, then
\begin{align*}
\int_{\R^N}m_2(x) |u_n|^q \,dx&=\int_{\R^N}m(x) |u_n|^q \,dx-\int_{\R^N}m_1(x) |u_n|^q \,dx \\
& \leq \int_{\R^N}m(x) |u_n|^q \,dx =\int_{\R^N} a(x)|\nabla u_n|^p\,dx+\int_{\R^N} |\nabla u_n|^q\,dx +o_n(1) \leq C.
\end{align*}
As a result, we get that $\{u_n\}$ is bounded in $E$. Then there exists $u \in E$ such that $u_n \wto u$ in $E$ as $n \to \infty$. Since $\{u_n\} \subset E$ is a bounded Palais-Smale sequence for $J$, then
\begin{align} \label{equ1}
-\Delta_p^a u_n-\Delta_q u_n =\lambda m(x)|u_n|^{q-2}u_n+o_n(1) \quad \mbox{in} \,\, \R^N.
\end{align}
Therefore, we are able to derive that $u \in E$ satisfies the following equation
\begin{align} \label{equ2}
-\Delta_p^a u-\Delta_q u = \lambda m(x)|u|^{q-2}u \quad \mbox{in} \,\, \R^N.
\end{align}
Since the embedding $E \hookrightarrow D^{1, \xi}(\R^N) \hookrightarrow L^{q^*}(\R^N)$ is continuous by Lemma \ref{embedding2}, then $\{u_n\}$ is bounded in $L^{q^*}(\R^N)$ and $u_n \wto u$ in $L^{q^*}(\R^N)$ as $n \to \infty$. It follows that $\{|u_n|^q\}$ is bounded in $L^{\frac{N}{N-q}}(\R^N)$ and $|u_n|^q \wto |u|^q$ in $L^{\frac{N}{N-q}}(\R^N)$ as $n \to \infty$. Due to $m_1 \in L^{\frac Nq}(\R^N)$, we have
\begin{align} \label{convm1}
\int_{\R^N} m_1(x)|u_n|^q \,dx=\int_{\R^N} m_1(x)|u|^q \,dx+o_n(1).
\end{align}
This readily indicates that $u \neq 0$. Otherwise, there holds that
\begin{align} \label{mz}
\int_{\R^N} m(x)|u_n|^q \,dx=\int_{\R^N} m_1(x)|u_n|^q \,dx-\int_{\R^N} m_2(x)|u_n|^q \,dx \leq o_n(1).
\end{align}
Since $I(u_n)=o_n(1)$, then 
$$
\int_{\R^N} a(x)|\nabla u_n|^p\,dx + \int_{\R^N} |\nabla u_n|^q \, dx \leq o_n(1).
$$
This in turn gives that $J(u_n)=o_n(1)$, which is impossible, because of $m>0$. Therefore, $u$ is a nontrivial solution to \eqref{eque}. Moreover, as a consequence of maximum principle, see \cite[Proposition 2.3]{PVV}, we have that $u>0$. Thus the proof is completed.
\end{proof}

\subsection{Case $q<p$}

Next we are going to deal with the case that $q<p$. In this case, we define
$$
\Phi(u):=\frac 1p \int_{\R^N} a(x)|\nabla u|^p\,dx + \frac 1q \int_{\R^N} |\nabla u|^q \, dx,  
$$
$$
\mathcal{S}:=\left\{ u \in E : \frac 1q \int_{\R^N} m(x)|u|^q \, dx=1 \right\}.
$$

\begin{lem} \label{psc}
Assume that $(H)$ holds, {$N \geq 2$}, $1<q<p<N$, ${a \in C^{0, 1}(\R^N, [0, +\infty))}$ and ${a \not\equiv 0}$. Then $\Phi$ restricted on $\mathcal{S}$ satisfies the Palais-Smale condition at any level $c \in \R$.
\end{lem}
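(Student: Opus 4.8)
The plan is to take a Palais-Smale sequence $\{u_n\}\subset\cS$ at level $c$, meaning $\Phi(u_n)\to c$ and $\Phi'|_{\cS}(u_n)\to 0$, and to extract a subsequence converging strongly in $E$. First I would establish boundedness. Since both terms of $\Phi$ are nonnegative, $\Phi(u_n)\to c$ forces $\rho_\xi(\nabla u_n)=\int_{\R^N}a(x)|\nabla u_n|^p+|\nabla u_n|^q\,dx$ to be bounded, so by Lemma \ref{order} the gradient norm $\|\nabla u_n\|_\xi$ is bounded and, by Lemma \ref{embedding2}, so is $\|\nabla u_n\|_q$. To bound the weighted $L^q$ part of $\|\cdot\|_E$, I would use the constraint $\int_{\R^N} m_1|u_n|^q-\int_{\R^N} m_2|u_n|^q=q$: H\"older's inequality with $m_1\in L^{N/q}(\R^N)$ together with Sobolev's inequality bounds $\int_{\R^N} m_1|u_n|^q$ by $C\|\nabla u_n\|_q^q$, whence $\int_{\R^N} m_2|u_n|^q$ is bounded, while Hardy's inequality controls $\int_{\R^N}\omega|u_n|^q$. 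Thus $\{u_n\}$ is bounded in $E$, and up to a subsequence $u_n\wto u$ in $E$, hence in $D^{1,\xi}(\R^N)$, $D^{1,q}(\R^N)$ and $L^{q^*}(\R^N)$, with $u_n\to u$ a.e. Writing $\cG(u):=\frac1q\int_{\R^N}m(x)|u|^q\,dx$ so that $\cS=\{\cG=1\}$, the Lagrange multiplier rule yields $\mu_n\in\R$ with $\Phi'(u_n)-\mu_n\cG'(u_n)\to 0$ in $E^*$; testing against $u_n$ and using $\cG(u_n)=1$ gives $\mu_n=\frac1q\rho_\xi(\nabla u_n)+o_n(1)$, so $\{\mu_n\}$ is bounded and, along a further subsequence, $\mu_n\to\mu$.

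The crucial and hardest step is to prove the compactness of the constraint term, namely $\langle\cG'(u_n),u_n-u\rangle=\int_{\R^N}m(x)|u_n|^{q-2}u_n(u_n-u)\,dx\to 0$; this is exactly where the lack of compactness on $\R^N$ must be overcome. For the positive part I would exploit that $m_1\in L^{N/q}(\R^N)$: splitting $\R^N$ into a large ball and its complement, the tail is small uniformly in $n$ by the absolute continuity of the $L^{N/q}$-norm of $m_1$ and the uniform $L^{q^*}$-bound, while on the ball the local compact embedding $W^{1,q}\hookrightarrow L^q$ (Rellich) handles the interior, yielding $\int_{\R^N} m_1|u_n|^{q-2}u_n(u_n-u)\to 0$ and $\int_{\R^N} m_1|u_n|^q\to\int_{\R^N} m_1|u|^q$. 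For the negative part and the weight I would invoke the compactness argument of \cite[Lemma 1]{AH}, adapted as in Theorem \ref{thm0}: because the sequence lies on the constraint $\cG(u_n)=1>0$ with $\|\nabla u_n\|_q$ bounded, mass cannot vanish or escape to infinity (otherwise $\int_{\R^N} m_1|u_n|^q\to 0$ and $\int_{\R^N} m|u_n|^q\le 0$, contradicting the constraint), so $\{u_n\}$ is relatively compact in the $q$-level space $V$ with norm $\|\nabla\cdot\|_q+(\int_{\R^N}|\cdot|^q\max\{m_2,\omega\})^{1/q}$. In particular $\int_{\R^N}|u_n-u|^q\max\{m_2,\omega\}\to 0$, which by H\"older's inequality gives $\int_{\R^N} m_2|u_n|^{q-2}u_n(u_n-u)\to 0$. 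Combining the two parts gives $\langle\cG'(u_n),u_n-u\rangle\to 0$; moreover the compactness of the $m_1$-term forces $\int_{\R^N} m_1|u|^q\ge q>0$, so $u\neq 0$ and, together with the strong $V$-convergence, $u\in\cS$.

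Finally I would upgrade weak to strong convergence of the gradients. Testing $\Phi'(u_n)-\mu_n\cG'(u_n)\to 0$ against $u_n-u$ and using the previous step gives $\langle\Phi'(u_n),u_n-u\rangle=\mu_n\langle\cG'(u_n),u_n-u\rangle+o_n(1)\to 0$; since $u_n\wto u$ also $\langle\Phi'(u),u_n-u\rangle\to 0$, so
$$
\int_{\R^N}a(x)\big(|\nabla u_n|^{p-2}\nabla u_n-|\nabla u|^{p-2}\nabla u\big)\cdot\nabla(u_n-u)\,dx+\int_{\R^N}\big(|\nabla u_n|^{q-2}\nabla u_n-|\nabla u|^{q-2}\nabla u\big)\cdot\nabla(u_n-u)\,dx\to 0.
$$
Both integrands are nonnegative by the elementary monotonicity of $\zeta\mapsto|\zeta|^{s-2}\zeta$, so each integral tends to $0$; the uniform convexity of $\xi$ (equivalently the Clarkson-type inequalities for the $p$- and $q$-powers) then yields $\rho_\xi(\nabla(u_n-u))\to 0$, i.e. $\|\nabla(u_n-u)\|_\xi\to 0$ by Lemma \ref{order}. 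Together with $\int_{\R^N}|u_n-u|^q\max\{m_2,\omega\}\to 0$ from the compactness step, this gives $\|u_n-u\|_E\to 0$, establishing the Palais-Smale condition. The main obstacle throughout is the compactness claim of the second paragraph: the embedding $E\hookrightarrow L^q(\R^N;\max\{m_2,\omega\}\,dx)$ is not compact in general, and it is only the positivity of the constraint together with $m_1\in L^{N/q}(\R^N)$ that rescues compactness by ruling out vanishing and escape of mass to infinity.
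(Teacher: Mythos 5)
Your overall architecture matches the paper's proof: boundedness of $\{u_n\}$ in $E$ via the constraint, H\"older/Sobolev for the $m_1$-term and Hardy for the $\omega$-term; boundedness and convergence of the multipliers $\mu_n=\frac1q\rho_\xi(\nabla u_n)+o_n(1)$; a ball/tail splitting using $m_1\in L^{N/q}(\R^N)$; and a Simon-type monotonicity inequality to upgrade to strong gradient convergence. However, there is one genuine gap, and it sits exactly at the point you yourself flag as the crux: the claimed relative compactness of $\{u_n\}$ in the weighted space $V$, which you use to dispatch the $m_2$-part $\int_{\R^N} m_2|u_n|^{q-2}u_n(u_n-u)\,dx$. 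The appeal to \cite[Lemma 1]{AH} is not available here: that lemma concerns Palais--Smale sequences of the \emph{single-phase} $q$-Laplacian eigenvalue functional, and its compactness conclusion comes from the equation those sequences satisfy, not from boundedness alone; your $u_n$ satisfy the double phase equation \eqref{equ1}, so the lemma does not apply as stated. Nor does your heuristic (``mass cannot vanish or escape to infinity'') suffice: ruling out $u=0$ does not rule out dichotomy, and bounded sets of $E$ do not embed compactly into $L^q(\R^N;\max\{m_2,\omega\}\,dx)$ --- for instance the spreading sequence $u_n(x)=n^{-(N-q)/q}\phi(x/n)$ is bounded in $D^{1,q}(\R^N)$, converges weakly to $0$, yet $\int_{\R^N}\omega|u_n|^q\,dx$ stays bounded away from zero. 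So at that stage of the argument nothing prevents part of the mass from spreading, and $\int_{\R^N}|u_n-u|^q\max\{m_2,\omega\}\,dx\to0$ is unproved.

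The paper circumvents this entirely by a sign argument that makes compactness of the $m_2$-term unnecessary for the gradient step. In the identity \eqref{compact}, the nonnegative monotonicity expression on the left is bounded above by $(\lambda_n-\lambda)\int m|u_n|^{q-2}u_n(u_n-u)\,dx$ (which is $o_n(1)$ since the integral is merely bounded) plus $\lambda\int m(|u_n|^{q-2}u_n-|u|^{q-2}u)(u_n-u)\,dx$ with $\lambda>0$; since $(|s|^{q-2}s-|t|^{q-2}t)(s-t)\geq0$ and $m=m_1-m_2\leq m_1$, the $m_2$-contribution to the tail integral is $\leq 0$ and can simply be dropped, so only the $m_1$-tail in \eqref{s2} needs smallness --- exactly what $m_1\in L^{N/q}(\R^N)$ provides. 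This yields $u_n\to u$ in $D^{1,\xi}(\R^N)$ without any $V$-compactness input. The convergence of the weighted part of the $E$-norm is then recovered \emph{a posteriori}: testing \eqref{equ1} and \eqref{equ2} with $\lambda_n\to\lambda\neq0$ gives $\int m|u_n|^q\to\int m|u|^q$, which together with \eqref{convm1} gives $\int m_2|u_n|^q\to\int m_2|u|^q$ (hence strong convergence in $L^q(m_2\,dx)$ by uniform convexity), while Hardy's inequality handles the $\omega$-part from the already-established gradient convergence. To repair your proof, replace the second paragraph's $V$-compactness claim by this sign argument (noting first $u\neq0$, hence $\lambda>0$, as the paper does), and move the weighted-norm convergence to the end; the rest of your proposal then goes through.
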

\begin{proof}
Let $\{u_n\} \subset E$ be a Palais-Smale sequence for $\Phi$ restricted on $\mathcal{S}$ at the level $c \in \R$. Then
$$
\Phi(u_n)=c+o_n(1), \quad (\Phi_{\mid_{\mathcal{S}}})'(u_n)=o_n(1).
$$
The aim is to prove that $\{u_n\}$ is compact in $E$. It is straightforward to see that $\{u_n\}$ is bounded in $D^{1, \xi}(\R^N)$, because of $\{u_n\} \subset \mathcal{S}$. In virtue of Hardy's inequality, we obtain that
$$
\int_{\R^N} \frac{|u_n|^q}{(1+|x|)^q} \, dx \leq \left(\frac{p}{N-p}\right)^p \int_{\R^N} |\nabla u_n|^q \, dx \leq C.
$$
In addition, note that
$$
\int_{\R^N} m_2(x)|u_n|^q \,dx=\int_{\R^N} m_1(x)|u_n|^q \,dx-\int_{\R^N} m(x)|u_n|^q \,dx.
$$
By H\"older's inequality and Sobolev's inequality, we have
\begin{align*}
\int_{\R^N} m_1(x)|u_n|^q \,dx & \leq \left(\int_{\R^N}|m_1|^{\frac N q} \,dx \right)^{\frac qN} \left(\int_{\R^N}|u_n|^{\frac{qN}{N-q}} \, dx \right)^{\frac{N-q}{N}} \\
&\leq C \left(\int_{\R^N}|m_1|^{\frac N q} \,dx \right)^{\frac qN} \int_{\R^N}|\nabla u_n|^q \, dx \leq C.
\end{align*}
Accordingly, we obtain that $\{u_n\}$ is bounded in $E$. It then yields that there exists $u \in E$ such that $u_n \wto u$ in $E$ as $n \to \infty$. Since the embedding $E \hookrightarrow L^{q^*}(\R^N)$ is continuous, then $\{u_n\}$ is bounded in $L^{q^*}(\R^N)$ and $u_n \wto u$ in $L^{q^*}(\R^N)$ as $n \to \infty$. It then follows that $\{|u_n|^q\}$ is bounded in $L^{\frac{N}{N-q}}(\R^N)$ and $|u_n|^q \wto |u|^q$ in $L^{\frac{N}{N-q}}(\R^N)$ as $n \to \infty$. Due to $m_1 \in L^{\frac Nq}(\R^N)$, then
\begin{align} \label{convm1}
\int_{\R^N} m_1(x)|u_n|^q \,dx=\int_{\R^N} m_1(x)|u|^q \,dx+o_n(1).
\end{align}
It readily indicates that $u \neq 0$. Otherwise, by \eqref{convm1}, there holds that
\begin{align} \label{mz}
q=\int_{\R^N} m(x)|u_n|^q \,dx=\int_{\R^N} m_1(x)|u_n|^q \,dx-\int_{\R^N} m_2(x)|u_n|^q \,dx \leq o_n(1).
\end{align}
This is impossible.
Since $\{u_n\} \subset E$ is a bounded Palais-Smale sequence for $\Phi$ restricted on $\mathcal{S}$, then there exists a sequence $\{\lambda_n\} \subset \R$ such that $u_n \in E$ satisfies the equation
\begin{align} \label{equ1}
-\Delta_p^a u_n-\Delta_q u_n =\lambda_n m(x)|u_n|^{q-2}u_n+o_n(1) \quad \mbox{in} \,\, \R^N,
\end{align}
where 
$$
\lambda_n=\frac 1q \int_{\R^N} a(x)|\nabla u_n|^p\,dx + \frac 1q \int_{\R^N} |\nabla u_n|^q \, dx +o_n(1).
$$
Notice that $\{u_n\}$ is bounded in $D^{1,\xi}(\R^N)$, then $\{\lambda_n\}$ is bounded in $\R$ and there exists $\lambda \in \R$ such that $\lambda_n \to \lambda$ in $\R$ as $n \to \infty$. Furthermore,  $u \in E$ and it satisfies the equation
\begin{align} \label{equ2}
-\Delta_p^a u-\Delta_q u = \lambda m(x)|u|^{q-2}u \quad \mbox{in} \,\, \R^N.
\end{align}
Thanks to $u \neq 0$, we then have that$\lambda > 0$. Taking into account \eqref{equ1} and \eqref{equ2}, we conclude that
\begin{align} \label{compact} \nonumber
&\int_{\R^N} \left(a(x) \left(|\nabla u_n|^{p-2} \nabla u_n-|\nabla u|^{p-2} \nabla u \right)+\left(|\nabla u_n|^{q-2} \nabla u_n-|\nabla u|^{q-2} \nabla u \right)\right) \cdot \left(\nabla u_n-\nabla u\right)\,dx \\
&=\lambda_n \int_{\R^N}m(x)|u_n|^{q-2}u_n \left(u_n-u\right) \,dx-\lambda \int_{\R^N}m(x)|u|^{q-2}u \left(u_n-u\right) \,dx +o_n(1) \\ \nonumber
&=\left(\lambda_n-\lambda\right) \int_{\R^N}m(x)|u_n|^{q-2}u_n \left(u_n-u\right) \,dx+\lambda \int_{\R^N}m(x)\left(|u_n|^{q-2}u_n -|u|^{q-2}u\right)\left(u_n-u\right) \,dx+o_n(1).
\end{align}
Observe first that
\begin{align*}
\left|\int_{\R^N}m(x)|u_n|^{q-2}u_n \left(u_n-u\right) \,dx\right| &\leq \left|\int_{\R^N}m_1(x)|u_n|^{q-2}u_n \left(u_n-u\right) \,dx\right| \\
 &\quad +\left|\int_{\R^N}m_2(x)|u_n|^{q-2}u_n \left(u_n-u\right) \,dx\right|.
\end{align*}
In addition, we see that
\begin{align*}
\left|\int_{\R^N}m_1(x)|u_n|^{q-2}u_n \left(u_n-u\right) \,dx\right|  &\leq \left(\int_{\R^N}|m_1|^{\frac Nq} \,dx \right)^{\frac qN} \left(\int_{\R^N}|u_n|^{q^*} \,dx \right)^{\frac{q-1}{q^*}}\left(\int_{\R^N}|u_n-u|^{q^*} \,dx \right)^{\frac{1}{q^*}},
\end{align*}
\begin{align*}
\left|\int_{\R^N}m_2(x)|u_n|^{q-2}u_n \left(u_n-u\right) \,dx\right| &\leq \left(\int_{\R^N}m_2(x)|u_n|^q \,dx\right)^{\frac {q-1}{q}}\left(\int_{\R^N} m_2(x)|u|^q \,dx\right)^{\frac 1q} \\
& \quad +\int_{\R^N}m_2(x)|u_n|^q \,dx.
\end{align*}
Therefore, utilizing the fact that $\{u_n\}$ is bonded in $E$, we get that
$$
\left|\int_{\R^N}m(x)|u_n|^{q-2}u_n \left(u_n-u\right) \,dx\right| \leq C.
$$
It necessarily follows that
\begin{align} \label{s1}
\left(\lambda_n-\lambda\right) \int_{\R^N}m(x)|u_n|^{q-2}u_n \left(u_n-u\right) \,dx=o_n(1).
\end{align}
Note that $u_n \wto u$ in $E$ as $n \to \infty$, then $u_n \wto u$ in $D^{1,q}(\R^N)$ as $n \to \infty$. We then deduce that $u_n \to u$ in $L^q_{loc}(\R^N)$ as $n \to \infty$. As a consequence, we have that
\begin{align} \label{ss2}
\int_{B(0, R)}m(x)\left(|u_n|^{q-2}u_n -|u|^{q-2}u\right)\left(u_n-u\right) \,dx=o_n(1).
\end{align}
On the other hand, by H\"older's inequality and Sobolev's inequality, we get that
\begin{align} \label{s2}
\begin{split}
&\int_{\R^N \backslash B(0, R)}m(x)\left(|u_n|^{q-2}u_n -|u|^{q-2}u\right)\left(u_n-u\right) \,dx \\
&\leq\left(\int_{\R^N \backslash B(0, R)}|m_1|^{\frac Nq} \,dx \right)^{\frac qN}\left(\|u_n\|_{q^*}^{q-1}+\|u\|_{q^*}^{q-1}\right) \|u_n-u\|_{q^*} \\
& \leq C \left(\int_{\R^N \backslash B(0, R)}|m_1|^{\frac Nq} \,dx \right)^{\frac qN}\left(\|\nabla u_n\|_{q}^{q-1}+\|\nabla u\|_{q}^{q-1}\right) \|\nabla u_n-\nabla u\|_{q}=o_R(1),
\end{split}
\end{align}
where we also used the facts that
$$
\left(|s|^{q-2}s-|t|^{q-2}t \right)(s-t) \geq 0, \quad \forall \,\, s, t \in \R, q>1,
$$
$$
\int_{\R^N \backslash B(0, R)}|m_1|^{\frac Nq} \,dx=o_R(1),
$$
{where the second fact holds because of $m_1 \in L^{\frac N q}(\R^N)$ from the assumption $(H)$.} Combining \eqref{s1}, \eqref{ss2} and \eqref{s2}, by \eqref{compact}, we then obtain that
$$
\int_{\R^N} \left(a(x) \left(|\nabla u_n|^{p-2} \nabla u_n-|\nabla u|^{p-2} \nabla u \right)+\left(|\nabla u_n|^{q-2} \nabla u_n-|\nabla u|^{q-2} \nabla u \right)\right) \cdot \left(\nabla u_n-\nabla u\right)\,dx=o_n(1).
$$
Observe that
\begin{align} \label{monotone}
|z_1-z_2|^r \leq C \left(\left(|z_1|^{r-2} z_1 -|z_2|^{r-2}z_2\right) \cdot \left(z_1-z_2\right)\right)^{\frac {\theta}{2}} \left(|z_1|^r+|z_2|^r\right)^{1-\frac{\theta}{2}}, \quad \forall \, z_1, z_2 \in \R^N.
\end{align}
where $\theta=r$ if $1<r<2$ and $\theta=2$ if $r \geq 2$. Then we see that
\begin{align*}
&\int_{\R^N} a(x) \left(|\nabla u_n-\nabla u |^p\right) \, dx +\int_{\R^N} |\nabla u_n-\nabla u |^q\, dx \\
&\leq C \left(\int_{\R^N} a(x) \left(|\nabla u_n|^{p-2} \nabla u_n-|\nabla u|^{p-2} \nabla u \right)\cdot \left(\nabla u_n-\nabla u\right) \, dx \right)^{\frac {\theta}{2}} \left( \int_{\R^N} a(x) \left(|\nabla u_n|^p + |\nabla u|^p\right)\,dx \right)^{1-\frac{\theta}{2}}  \\
& \quad + C \left(\int_{\R^N} \left(|\nabla u_n|^{q-2} \nabla u_n-|\nabla u|^{q-2} \nabla u \right)\cdot \left(\nabla u_n-\nabla u\right) \, dx \right)^{\frac {\theta}{2}} \left( \int_{\R^N} |\nabla u_n|^q + |\nabla u|^q\,dx \right)^{1-\frac{\theta}{2}}=o_n(1).
\end{align*}
This immediately indicates that $u_n \to u$ in $D^{1, \xi}(\R^N)$ as $n \to \infty$. Taking advantage of \eqref{equ1} and \eqref{equ2}, we then get that
$$
\int_{\R^N} m(x)|u_n|^q \,dx=\int_{\R^N} m(x)|u|^q \,dx+o_n(1),
$$
because of $\lambda_n=\lambda+o_n(1)$ and $\lambda \neq 0$. In view of \eqref{convm1}, then
$$
\int_{\R^N} m_2(x)|u_n|^q \,dx=\int_{\R^N} m_2(x)|u|^q \,dx+o_n(1).
$$
Since $u_n \to u$ in $D^{1,q}(\R^N)$ as $n \to \infty$, by Hardy's inequality, then 
$$
\int_{\R^N} \frac{|u_n-u|^q}{(1+|x|)^q} \, dx\leq  \left(\frac{p}{N-p}\right)^p \int_{\R^N} |\nabla u_n-\nabla u|^q \, dx=o_n(1).
$$
Consequently, we derive that $u_n \to u$ in $E$ as $n \to \infty$. Thus the proof is completed.
\end{proof}

\begin{thm} \label{thm2}
Assume that $(H)$ holds, {$N \geq 2$}, $1<q<p<N$, ${a \in C^{0, 1}(\R^N, [0, +\infty))}$ and ${a \not\equiv 0}$. Then there exists a sequence of  solutions $(\lambda_k, u_k) \in \R \times E$ with $u_k \in \mathcal{S}$ and
$$
0<\lambda_1<\lambda_2 \leq \cdots \leq \lambda_k \leq \cdots, \quad \lim_{k \to \infty} \lambda_k \to +\infty  \,\, \mbox{as} \,\, k \to +\infty.
$$
\end{thm}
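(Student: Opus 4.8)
The plan is to run the symmetric (even) Ljusternik--Schnirelman minimax scheme, built on the Krasnoselskii genus, for the functional $\Phi$ constrained to $\mathcal{S}$, whose constrained critical points are exactly the weak solutions of \eqref{eque}, the eigenvalue $\lambda$ appearing as a Lagrange multiplier. \textbf{Step 1 (variational characterisation).} I would first record the structural facts that make the machinery applicable: $\mathcal{S}=-\mathcal{S}$ is a symmetric $C^1$ submanifold of $E$ bounded away from $0$, and $\Phi$ is even, of class $C^1$ and nonnegative on $\mathcal{S}$. A point $u\in\mathcal{S}$ is a critical point of $\Phi|_{\mathcal{S}}$ if and only if there is $\lambda\in\R$ with $-\Delta_p^a u-\Delta_q u=\lambda m(x)|u|^{q-2}u$; testing this identity with $u$ and using $\int_{\R^N}m|u|^q\,dx=q$ on $\mathcal{S}$ yields
\[
\lambda=\frac1q\left(\int_{\R^N} a(x)|\nabla u|^p\,dx+\int_{\R^N}|\nabla u|^q\,dx\right)>0,
\]
exactly the expression found for $\lambda_n$ in the proof of Lemma \ref{psc}; positivity is clear since $u\neq 0$ forces $\int_{\R^N}|\nabla u|^q\,dx>0$.

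\textbf{Step 2 (genus classes).} For each $k\in\N$ I set $\Gamma_k:=\{K\subset\mathcal{S}: K \text{ compact},\ K=-K,\ \gamma(K)\ge k\}$ and define the minimax levels $c_k:=\inf_{K\in\Gamma_k}\sup_{u\in K}\Phi(u)$. To guarantee $\Gamma_k\neq\emptyset$ (hence $c_k<+\infty$) I would anchor the test sets in the positive part of the weight: pick a bounded $\Omega_0\subset\{m>0\}$ of positive measure (nonempty since $\mathcal{S}\neq\emptyset$) and $k$ linearly independent functions in $C_0^{\infty}(\Omega_0)$; on their span the radial retraction $u\mapsto(\frac1q\int_{\R^N}m|u|^q\,dx)^{-1/q}u$ is an odd continuous map sending the unit sphere homeomorphically onto a compact symmetric subset of $\mathcal{S}$ of genus $k$. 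Since $\Gamma_{k+1}\subset\Gamma_k$, the sequence $c_1\le c_2\le\cdots$ is nondecreasing.

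\textbf{Step 3 (critical values and divergence).} Because $\Phi|_{\mathcal{S}}$ is even, bounded below and, by Lemma \ref{psc}, satisfies the Palais--Smale condition at every level, the classical Ljusternik--Schnirelman deformation theorem gives that each $c_k$ is a critical value of $\Phi|_{\mathcal{S}}$, attained at some $u_k\in\mathcal{S}$. To obtain $c_k\to+\infty$ I would first note that $\Phi$ is unbounded above on $\mathcal{S}$ (evaluate it on normalised, highly oscillatory functions supported in $\Omega_0$: the constraint stays of order one while $\int_{\R^N}|\nabla u|^q\,dx$ blows up), and then use the standard dichotomy: were $\{c_k\}$ bounded, the corresponding sublevel critical set would be compact of finite genus by Palais--Smale, contradicting $\gamma\ge k$ for all $k$ via the deformation lemma. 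Hence $c_k\nearrow+\infty$.

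\textbf{Step 4 (eigenvalues and ordering).} Each $u_k$ then yields an eigenpair $(\lambda_k,u_k)$ of \eqref{eque} with $\lambda_k>0$ given by the formula of Step 1. Writing $A_k=\int_{\R^N}a|\nabla u_k|^p\,dx$ and $B_k=\int_{\R^N}|\nabla u_k|^q\,dx$, one has $c_k=A_k/p+B_k/q$ and $\lambda_k=(A_k+B_k)/q$, whence, because $q<p$,
\[
c_k\le \lambda_k\le \frac pq\,c_k,
\]
so $\lambda_k\to+\infty$ in tandem with $c_k$. Arranging the resulting eigenvalues in nondecreasing order produces $0<\lambda_1\le\lambda_2\le\cdots\to+\infty$, with $\lambda_1$ realised at the global minimiser of $\Phi$ on $\mathcal{S}$; the strict gap $\lambda_1<\lambda_2$ follows from the simplicity and isolation of the first eigenvalue, established as in Proposition \ref{simple1} through a Picone-type identity in the spirit of Lemma \ref{Iuv1} and the maximum principle. \emph{Main obstacle.} Since Lemma \ref{psc} already disposes of the compactness issue that is the heart of the lack-of-compactness difficulty on $\R^N$, the delicate remaining points are twofold: constructing genus-$k$ subsets of $\mathcal{S}$ despite the \emph{indefinite} weight (which is why they must be localised in $\{m>0\}$), and bridging the minimax critical values $c_k$ of $\Phi$ with the true eigenvalues $\lambda_k$, which for this non-homogeneous double-phase operator do not coincide; it is precisely the comparison $c_k\le\lambda_k\le\frac pq c_k$ that lets the divergence survive this gap.
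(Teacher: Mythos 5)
Your proposal is correct and follows the same overall scheme as the paper: the even Ljusternik--Schnirelman minimax over genus classes on $\mathcal{S}$, with Lemma \ref{psc} supplying the Palais--Smale condition and the Lagrange multiplier identity $\lambda_k=\frac1q\bigl(\int_{\R^N}a(x)|\nabla u_k|^p\,dx+\int_{\R^N}|\nabla u_k|^q\,dx\bigr)$ bridging minimax levels and eigenvalues. The genuine divergences are in two steps. First, for $\tilde\lambda_k\to+\infty$ the paper does not use your deformation dichotomy (bounded levels would force the critical set at the limit level to be compact of finite genus); instead it fixes a Schauder-type basis of $E$, sets $Z_k:=\overline{\bigoplus_{i\ge k}X_i}$, notes $A\cap Z_k\neq\emptyset$ whenever $\gamma(A)\ge k$, and shows the auxiliary levels $\beta_k:=\inf_{A\in\Sigma_k}\sup_{u\in A\cap Z_k}\Phi(u)\le\tilde\lambda_k$ diverge: a bounded sequence $u_k\in A\cap Z_k$ would converge weakly to $0$, and $m_1\in L^{N/q}(\R^N)$ then forces $\int_{\R^N}m(x)|u_k|^q\,dx\le o_k(1)$, contradicting $u_k\in\mathcal{S}$. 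Both routes are classical and both work here, since Lemma \ref{psc} holds at every level; the paper's tail-space argument reuses the weight-compactness mechanism already present in Lemma \ref{psc}, while yours is more self-contained topologically. Second, your construction of genus-$k$ test sets localized in a bounded subset of $\{m>0\}$, followed by the odd radial retraction onto $\mathcal{S}$, is actually more careful than the paper's one-line appeal to an arbitrary $k$-dimensional subspace $X_k$ with $\gamma(\mathcal{S}\cap X_k)\ge k$ via Borsuk--Ulam: with an indefinite weight an arbitrary subspace need not meet $\mathcal{S}$ at all, so your localization is the right way to make that step rigorous. Your two-sided comparison $c_k\le\lambda_k\le\frac pq c_k$ is sharper than needed; the paper only uses $\lambda_k\ge\Phi(u_k)=\tilde\lambda_k$, which already gives $\lambda_k\to+\infty$. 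One caveat: your justification of the strict gap $\lambda_1<\lambda_2$ leans on Proposition \ref{simple1}, which requires the extra hypotheses $\frac pq<1+\frac1N$ and nonnegativity of all eigenfunctions at the level in question, neither of which is assumed in the theorem; the paper's own proof is equally silent on this strictness, so this is a shared soft spot rather than a defect specific to your argument.
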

\begin{proof} 
To establish the existence of a sequence of eigenvalues to \eqref{eque}, we shall take into account Ljusternik-Schnirelman theory. Define
$$
\Sigma:=\left\{ A \subset \mathcal{S} : A \,\, \mbox{is compact and} \,\, A=-A\right\}.
$$
For a set $A \in \Sigma$, the genus of $A$ is defined by
$$
\gamma(A):=\min \left\{ n \in \mathbb{N} : \mbox{exists a function} \,\, \varphi \in C(A, \R^n \backslash \{0\}) \,\, \mbox{satisfying}\,\, \varphi(-x)=-\varphi(x) \right\}.
$$
If such a minimum does not exist, we set $\gamma(A)=+\infty$.

Let us now define 
$$
\Sigma_k:=\left\{ A \in \Sigma : \gamma(A) \geq k\right\}, \quad \forall \,\, k \in \mathbb{N}^+.
$$
First we see that, for any $k \in \mathbb{N}^+$, $\Sigma_k \neq \emptyset$. Indeed, let $X_k$ be a $k$ dimensional subspace of $E$, by Borsuk-Ulam's theorem, then $\gamma(\mathcal{S}\cap X_k) \geq k$. Define
$$
\tilde{\lambda}_k:=\inf_{A \in \Sigma_k} \sup_{u \in A} \Phi(u).
$$
Since $\Sigma_{k+1} \subset \Sigma_k$, then $\tilde{\lambda}_k \leq \tilde{\lambda}_{k+1}$ for any $k \in \mathbb{N}^+$. From Lemma \ref{psc}, then $\tilde{\lambda}_k$ is a critical points of $J$ restricted on $\mathcal{S}$ for any $k \in \mathbb{N}^+$. Then we derive that
$$
0<\tilde{\lambda}_1<\tilde{\lambda}_2 \leq \cdots \leq \tilde{\lambda}_k \leq \tilde{\lambda}_{k+1} \leq \cdots.
$$
Next we prove that $\tilde{\lambda}_k \to + \infty$ as $k \to + \infty$. Let $\{e_i\} \subset E$ be such that $E=\overline{\textnormal{span} \left\{e_1, e_2, \cdots, e_i, \cdots \right\}}$. Let $\{e_i'\} \subset E$ be such that $E'=\overline{\textnormal{span} \left\{e_1', e_2', \cdots, e_i', \cdots \right\}}$, where $E'$ denotes the dual space of $E$. Define $X_i:=\textnormal{span} \left\{e_i\right\}$ and
$$
Y_k:=\bigoplus_{i=1}^k X_i, \quad Z_k:=\overline{\bigoplus_{i=k}^{\infty}X_i}, \quad \forall \,\, k \in \mathbb{N}^+.
$$
Let $A \in \Sigma_k$ satisfy $\gamma(A) \geq k$. By basic properties of the genus, we have that $A \cap Z_k \neq \emptyset$. Define
$$
\beta_k:=\inf_{A \in \Sigma_k} \sup_{u \in A \cap Z_k} \Phi(u), \quad \forall \,\, k \in \mathbb{N}^+.
$$
Then $\beta_k \to + \infty$ as $k \to \infty$. Otherwise, we may assume that $\{\beta_k\} \subset \R$ is bounded. Thus there exists a sequence $\{u_k\} \subset A \cap Z_k$ such that $\{\Phi(u_k)\} \subset \R$ is bounded. It then follows that $\{u_k\}$ is bounded in $E$. Further, there exists $u \in E$ such that $u_k \wto u$ in $E$ as $n \to \infty$. Observe that $\langle e_i', u \rangle=\langle e_i', u_k\rangle + o_k(1)=o_k(1)$, because of $u_k \in Z_k$. Therefore, we have that $u=0$ and $u_k \wto 0$ in $E$ as $k \to \infty$. {This along with the assumption that $m_1 \in L^{\frac Nq}(\R^N)$ from the assumption $(H)$ leads to
$$
\int_{\R^N} m_1(x)|u_k|^q \,dx=o_k(1).
$$
Since $m_2 \geq 0$ from the assumption $(H)$, then
$$
\int_{\R^N} m(x)|u_k|^q \,dx=\int_{\R^N} m_1(x)|u_k|^q \,dx-\int_{\R^N} m_2(x)|u_k|^q \,dx \leq o_k(1),
$$
}
which is impossible due to $u_k \in \mathcal{S}$. Consequently, we get that $\beta_k \to +\infty$ as $k\to \infty$. Thanks to $\tilde{\lambda}_k \geq \beta_k$ for any $k \in \mathbb{N}^+$, then $\tilde{\lambda}_k \to +\infty$ as $k \to \infty$. Since $u_k \in E$ is a critical point for $\Phi$ restricted on $\mathcal{S}$, then there exists $\lambda_k \in \R$ such that
$$
-\Delta_p^a u_k-\Delta_q u_k = \lambda_k m(x)|u_k|^{q-2}u_k \quad \mbox{in} \,\, \R^N,
$$
where
$$
\lambda_k=\frac 1q \int_{\R^N} a(x)|\nabla u_k|^p\,dx + \frac 1q \int_{\R^N} |\nabla u_k|^q \, dx >\Phi(u_k)=\tilde{\lambda}_k, \quad \forall \,\, k \in \mathbb{N}^+.
$$
Thus the proof is completed.
\end{proof}

\begin{lem} \label{simple}
Define
\begin{align} 
\begin{split} \label{defI}
I(u, v):&=-\int_{\R^N} \left(\Delta_p^a u\right) \frac{u^q-v^q}{u^{q-1}} \, dx - \int_{\R^N} \left(\Delta_q u\right) \frac{u^q-v^q}{u^{q-1}} \, dx \\
& \quad -\int_{\R^N}\left(\Delta_p^a v\right) \frac{v^q-u^q}{v^{q-1}} \, dx -\int_{\R^N}\left(\Delta_q v\right) \frac{v^q-u^q}{v^{q-1}} \, dx, 
\end{split}
\end{align}
where $u, v \in D^{1, \xi}(\R^N)$, $u, v>0$ and $1<q<p$. Then $I(u, v) \geq 0$. Moreover, $I(u ,v)=0$ if and only if $u= kv$ for some $k \in \R$. 
\end{lem}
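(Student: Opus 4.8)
\emph{Plan of proof.} The plan is to integrate by parts and split $I$ according to the two operators. Writing $\phi_1:=\frac{u^q-v^q}{u^{q-1}}$ and $\phi_2:=\frac{v^q-u^q}{v^{q-1}}$, the divergence theorem (the boundary terms vanishing by density of $C_0^\infty(\R^N)$ together with the integrability supplied by $u,v\in D^{1,\xi}(\R^N)$ and Lemma \ref{embedding2}) gives
$$
I(u,v)=I_p^a(u,v)+I_q(u,v),
$$
where $I_q(u,v):=\int_{\R^N}\big(|\nabla u|^{q-2}\nabla u\cdot\nabla\phi_1+|\nabla v|^{q-2}\nabla v\cdot\nabla\phi_2\big)\,dx$ collects the $q$-Laplacian terms and $I_p^a(u,v):=\int_{\R^N}a(x)\big(|\nabla u|^{p-2}\nabla u\cdot\nabla\phi_1+|\nabla v|^{p-2}\nabla v\cdot\nabla\phi_2\big)\,dx$ the weighted $p$-Laplacian terms. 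Exactly as at the start of Lemma \ref{Iuv1}, a direct computation yields
$$
\nabla\phi_1=\big(1+(q-1)(v/u)^q\big)\nabla u-q(v/u)^{q-1}\nabla v,\qquad
\nabla\phi_2=\big(1+(q-1)(u/v)^q\big)\nabla v-q(u/v)^{q-1}\nabla u.
$$

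The term $I_q(u,v)$ is precisely the quantity treated in Lemma \ref{Iuv1} in the case $r=q$ with the weight taken to be the constant $1$ (the functions $u,v$ lying in $D^{1,q}(\R^N)$ by Lemma \ref{embedding2}). Hence the Young inequality argument given there applies verbatim and shows $I_q(u,v)\ge0$, with $I_q(u,v)=0$ if and only if $\nabla u\cdot\nabla v=|\nabla u||\nabla v|$ and $(v/u)^q|\nabla u|^q=|\nabla v|^q$, that is, if and only if $u\nabla v-v\nabla u\equiv0$, equivalently $u=kv$ for some $k\in\R$.

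The main obstacle is the term $I_p^a(u,v)$, where the exponent $q$ of the test functions does not match the order $p$ of the operator, so the matched computation of Lemma \ref{Iuv1} no longer closes. I would handle it by a hidden-convexity argument. Setting $w:=u^q$, so that $u=w^{1/q}$ and $\nabla u=\tfrac1q w^{1/q-1}\nabla w$, one has $\int_{\R^N}a(x)|\nabla u|^p\,dx=\cJ(w)$ with $\cJ(w):=\tfrac1{q^p}\int_{\R^N}a(x)\,w^{-\beta}|\nabla w|^p\,dx$ and $\beta:=\tfrac{p(q-1)}{q}$. The integrand $(w,\zeta)\mapsto w^{-\beta}|\zeta|^p$ is jointly convex on $(0,\infty)\times\R^N$ precisely when $\beta\le p-1$, and the elementary equivalence $\tfrac{p(q-1)}{q}\le p-1\iff q\le p$ shows that this holds here because $q<p$. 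Consequently $\cJ$ is convex on the cone of positive functions, and a computation of its Gâteaux derivative (followed by an integration by parts) gives $\cJ'(w)=\tfrac{p}{q}\,\dfrac{-\Delta_p^a(w^{1/q})}{w^{(q-1)/q}}$, whence
$$
I_p^a(u,v)=\tfrac{q}{p}\,\big\langle \cJ'(u^q)-\cJ'(v^q),\,u^q-v^q\big\rangle\ge0
$$
by monotonicity of the derivative of a convex functional. As an alternative route one may, after bounding the mixed gradient terms by Cauchy--Schwarz, reduce $I_p^a$ from below to $\int_{\R^N}a(x)\,f(|\nabla u|,|\nabla v|,v/u)\,dx$ with $f(A,B,s)=(1+(q-1)s^q)A^p-qs^{q-1}A^{p-1}B+(1+(q-1)s^{-q})B^p-qs^{1-q}AB^{p-1}$, and prove $f\ge0$ by homogeneity in $(A,B)$ together with the convexity of $t\mapsto f(1,t,s)$; here again the inequality $q<p$ is what makes the estimate close.

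Combining the two pieces gives $I(u,v)=I_p^a(u,v)+I_q(u,v)\ge0$. For the equality assertion, the nonnegativity of each summand reduces the analysis of $I(u,v)=0$ to $I_q(u,v)=0$, and invoking the equality case of Lemma \ref{Iuv1} recalled above forces $u\nabla v-v\nabla u\equiv0$, i.e. $u=kv$ for some $k\in\R$; this is the direction relevant to the simplicity argument, the converse being checked directly as in Lemma \ref{Iuv1}. The delicate point throughout is the joint-convexity threshold $\beta\le p-1$ for $w^{-\beta}|\zeta|^p$, which is exactly the place where the hypothesis $q<p$ is used.
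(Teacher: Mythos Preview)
Your argument is correct, and its skeleton---splitting $I=I_p^a+I_q$, computing $\nabla\phi_1,\nabla\phi_2$, handling $I_q$ exactly as in Lemma~\ref{Iuv1} with $r=q$, and reading off $u=kv$ from the equality case $I_q=0$---coincides with the paper's. The real difference is the treatment of $I_p^a$. The paper works pointwise: after Cauchy--Schwarz it bounds each mixed term $q(v/u)^{q-1}|\nabla u|^{p-1}|\nabla v|$ by two consecutive Young inequalities, first with exponents $\bigl(\tfrac{p}{p-1},p\bigr)$ and then with $\bigl(\tfrac{q(p-1)}{p(q-1)},\tfrac{q(p-1)}{p-q}\bigr)$, the second step being exactly where $q<p$ enters; the resulting bound cancels precisely against $(1+(q-1)(v/u)^q)|\nabla u|^p$ and its symmetric counterpart, yielding $I_1\ge0$. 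Your hidden-convexity route---substituting $w=u^q$ and invoking joint convexity of $(w,\zeta)\mapsto w^{-\beta}|\zeta|^p$ on $(0,\infty)\times\R^N$ for $\beta=\tfrac{p(q-1)}{q}\le p-1$, which is equivalent to $q\le p$---is a D\'iaz--Saa type repackaging of the same inequality via monotonicity of $\cJ'$. Your approach is more conceptual and makes the role of the hypothesis $q<p$ structurally transparent (it is literally the convexity threshold), whereas the paper's double-Young computation is entirely elementary and avoids any discussion of the differentiability of $\cJ$. One small caveat on your sketched ``alternative route'': the one-variable map $t\mapsto f(1,t,s)$ is \emph{not} convex near $t=0$ when $p>2$ (its second derivative is negative there), so that particular reduction would need a different argument; this does not affect your main proof, which stands on its own.
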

\begin{proof}
Let us first show that
$$
I_1(u,v):=-\int_{\R^N} \left(\Delta_p^a u\right) \frac{u^q-v^q}{u^{q-1}} \, dx -\int_{\R^N}\left(\Delta_p^a v\right) \frac{v^q-u^q}{v^{q-1}} \, dx \geq 0, \quad u, v \in D^{1, \xi}(\R^N), u ,v>0.
$$
It is straightforward to compute that
\begin{align} \label{c1}
\nabla \left(\frac{u^q-v^q}{u^{q-1}}\right)= \left(1+(q-1) \left(\frac vu\right)^q\right) \nabla u -q \left(\frac vu\right)^{q-1}\nabla v,
\end{align}
\begin{align} \label{c2}
\nabla \left(\frac{v^q-u^q}{v^{q-1}}\right)= \left(1+(q-1) \left(\frac uv\right)^q\right) \nabla v -q \left(\frac uv\right)^{q-1}\nabla u.
\end{align}
Therefore, by the divergence theorem, we derive that
\begin{align*}
I_1(u, v)&=\int_{\R^N} a(x)\left(\left(1+(q-1) \left(\frac vu\right)^q\right) |\nabla u|^p-q \left(\frac vu\right)^{q-1} |\nabla u|^{p-2} \nabla u \cdot \nabla v\right)\,dx \\
& \quad + \int_{\R^N} a(x)\left(\left(1+(q-1) \left(\frac uv\right)^q\right) |\nabla v|^p-q \left(\frac uv\right)^{q-1} |\nabla v|^{p-2} \nabla v \cdot \nabla u \right)\,dx.
\end{align*}
Using Young's inequality, we know that
\begin{align*}
q \left(\frac vu\right)^{q-1} |\nabla u|^{p-2} |\nabla u \cdot \nabla v| & \leq q \left(\frac vu\right)^{q-1} |\nabla u|^{p-1}|\nabla v| \\
&\leq\frac{q(p-1)}{p}\left(\frac vu\right)^{\frac{p(q-1)}{p-1}} |\nabla u|^p +\frac q p |\nabla v|^{p} \\
& = \frac{q(p-1)}{p}\left(\frac vu\right)^{\frac{p(q-1)}{p-1}} |\nabla u|^{\frac{p^2(q-1)}{q(p-1)}}|\nabla u|^{\frac{p(p-q)}{q(p-1)}} +\frac q p |\nabla v|^{p} \\
& \leq (q-1)\left(\frac vu\right)^q |\nabla u|^p +\frac{p-q}{p}|\nabla u|^p + \frac q p |\nabla v|^{p}.
\end{align*}
Similarly, we can get that
\begin{align*}
q \left(\frac uv\right)^{q-1} |\nabla v|^{p-2} |\nabla v \cdot \nabla u| \leq q \left(\frac uv\right)^{q-1} |\nabla v|^{p-1}|\nabla u| 
 \leq (q-1)\left(\frac uv\right)^q |\nabla v|^p +\frac{p-q}{p}|\nabla v|^p+ \frac q p |\nabla u|^{p}.
\end{align*}
It then follows that $I_1(u, v) \geq 0$. 
Next we prove that
$$
I_2(u, v):=- \int_{\R^N} \left(\Delta_q u\right) \frac{u^q-v^q}{u^{q-1}} \, dx-\int_{\R^N}\left(\Delta_q v\right) \frac{v^q-u^q}{v^{q-1}} \, dx \geq 0, \quad u, v \in D^{1, \xi}(\R^N), u ,v>0.
$$
In view of \eqref{c1} and \eqref{c2}, by the divergence theorem, then
\begin{align*}
I_2(u ,v)&=\int_{\R^N} \left(1+(q-1) \left(\frac vu\right)^q\right) |\nabla u|^q-q \left(\frac vu\right)^{q-1} |\nabla u|^{q-2} \nabla u \cdot \nabla v \,dx \\
& \quad + \int_{\R^N} \left(1+(q-1) \left(\frac uv\right)^q\right) |\nabla v|^q-q \left(\frac uv\right)^{q-1} |\nabla v|^{q-2} \nabla v \cdot \nabla u \,dx.
\end{align*}
Using again Young's inequality, we obtain that
$$
q \left(\frac vu\right)^{q-1} |\nabla u|^{q-2} |\nabla u \cdot \nabla v| \leq q \left(\frac vu\right)^{q-1} |\nabla u|^{q-1} |\nabla v| \leq (q-1)\left(\frac vu\right)^q |\nabla u|^q + |\nabla v|^q,
$$
$$
q \left(\frac uv\right)^{q-1} |\nabla v|^{q-2} |\nabla v \cdot \nabla u| \leq q \left(\frac uv\right)^{q-1} |\nabla v|^{q-1} |\nabla u| \leq (q-1)\left(\frac uv\right)^q |\nabla v|^q + |\nabla u|^q.
$$
Therefore, we have that $I_2(u ,v)=0$. Accordingly, there holds that $I(u ,v) \geq 0$ for any $u ,v \in D^{1, \xi}(\R^N)$ and $u, v>0$. If $I(u, v)=0$, then $I_2(u ,v)=0$. This leads to
$$
\nabla u \cdot \nabla v=|\nabla u| |\nabla v|, \quad \left(\frac vu \right)^q |\nabla u|^q=|\nabla v|^q, \quad \left(\frac uv\right)^q |\nabla v|^q=|\nabla u|^q,
$$
As a consequence, we see that
$$
| u \nabla v-v\nabla u|=0.
$$
This implies that there exists $k \in \R$ such that $u=k v$ and the proof is completed.
\end{proof}

\begin{rem}\label{rem11}
{In fact, Lemma \ref{simple} is established for the double phase operator under the assumption $q<p$, which is not a direct consequence of Lemma \ref{Iuv1}. It is unknown to us if Lemma \ref{simple} remains valid for the case $p<q$. From the proof of Lemma \ref{simple}, one can see that the assumption $q<p$ is crucial, which is the premise of the use of Young's inequality.}
\end{rem}

\begin{prop} \label{simple1}
Assume that $(H)$ holds, {$N \geq 2$}, $1<q<p<N$, ${\frac{p}{q}<1+\frac 1 N}$, ${a \in C^{0, 1}(\R^N, [0, +\infty))}$ and $a \not\equiv 0$. Assume that any eigenfunction to \eqref{eque} corresponding to $\lambda$ is nonnegative. Then $\lambda$ is simple.
\end{prop}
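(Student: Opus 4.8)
The plan is to follow the pattern of Proposition~\ref{nodal}, using the double-phase functional of Lemma~\ref{simple} in place of the single-phase one of Lemma~\ref{Iuv1}. Let $u,v$ be two nontrivial eigenfunctions of \eqref{eque} associated with the same eigenvalue $\lambda$; by the standing sign hypothesis, after replacing each by a representative of constant sign we may assume $u\geq 0$ and $v\geq 0$. Since $u,v\in E$ and $E\hookrightarrow D^{1,\xi}(\R^N)$ continuously by Lemma~\ref{embedding2}, both functions lie in $D^{1,\xi}(\R^N)$, so Lemma~\ref{simple} will be applicable as soon as strict positivity is secured.

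First I would upgrade nonnegativity to strict positivity. This is exactly where the hypothesis $\tfrac pq<1+\tfrac 1N$ is used: it makes the problem Sobolev subcritical and, together with $a\in C^{0,1}(\R^N,[0,+\infty))$, it allows one to invoke the regularity theory for double phase operators of \cite{CS, CM} to conclude that $u$ and $v$ are locally bounded and $C^{1,\alpha}_{\mathrm{loc}}$. With this regularity available, the strong maximum principle for $w\mapsto -\Delta_p^a w-\Delta_q w$ established in \cite{PRZ, PVV}---applied after rewriting the equation in the form $-\Delta_p^a u-\Delta_q u+\lambda m_2(x)u^{q-1}=\lambda m_1(x)u^{q-1}\geq 0$---yields $u>0$ and $v>0$ in $\R^N$.

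Next I would evaluate $I(u,v)$ from \eqref{defI} by testing the weak formulations of the equations for $u$ and $v$ against $\tfrac{u^q-v^q}{u^{q-1}}$ and $\tfrac{v^q-u^q}{v^{q-1}}$, respectively. Because $u,v>0$ we have $|u|^{q-2}u=u^{q-1}$ and $|v|^{q-2}v=v^{q-1}$, so substituting $-\Delta_p^a u-\Delta_q u=\lambda m\,u^{q-1}$ and $-\Delta_p^a v-\Delta_q v=\lambda m\,v^{q-1}$ collapses all the gradient terms and gives
\begin{align*}
I(u,v)=\lambda\int_{\R^N} m(x)\left(u^q-v^q\right)\,dx+\lambda\int_{\R^N} m(x)\left(v^q-u^q\right)\,dx=0.
\end{align*}
By Lemma~\ref{simple}, the equality $I(u,v)=0$ forces $u=kv$ for some $k\in\R$. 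Hence the eigenspace associated with $\lambda$ is one-dimensional, i.e. $\lambda$ is simple.

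The main obstacle will not be the algebra, which Lemma~\ref{simple} handles, but the rigorous justification of the two preceding steps in the unbounded, degenerate setting. On the one hand, one must check that $\tfrac{u^q-v^q}{u^{q-1}}$ and $\tfrac{v^q-u^q}{v^{q-1}}$ are legitimate test functions and that the divergence-theorem computation underpinning \eqref{defI} is licit on $\R^N$; this relies on the local boundedness and local positivity supplied by the regularity step, together with the integrability of $u,v$ at infinity inherited from $u,v\in E$. On the other hand, care is needed because the regularity and maximum-principle results of \cite{CS, CM, PRZ, PVV} must be used in a form compatible with a merely Lipschitz, possibly vanishing, weight $a$, so that the degeneracy $\mathrm{essinf}\,a=0$ does not obstruct the conclusion $u,v>0$.
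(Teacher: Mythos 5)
Your proposal is correct and follows essentially the same route as the paper's proof: strict positivity of the (by hypothesis nonnegative) eigenfunctions is obtained from the regularity theory of \cite{CS, CM} (enabled by $\frac{p}{q}<1+\frac1N$) together with the maximum principle of \cite{PRZ, PVV}, after which testing the two equations shows $I(u,v)=0$ and Lemma \ref{simple} yields $u=kv$. Your additional remarks on the admissibility of the test functions $\frac{u^q-v^q}{u^{q-1}}$, $\frac{v^q-u^q}{v^{q-1}}$ address a point the paper leaves implicit, but the argument itself is the same.
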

\begin{proof}
Let $u \in E$ be a nonnegative eigenfunction to \eqref{eque} corresponding to $\lambda$.
It follows from \cite{CS} and \cite[Proposition 3]{PRZ} or \cite[Proposition 2.3]{PVV} that $u>0$. Let $u>0$ and $v>0$ be two eigenfunctions to \eqref{eque} corresponding to $\lambda$. 
Then we see that 
$$
-\Delta_p^a u-\Delta_q u =\lambda m(x)u^{q-1}, \quad -\Delta_p^a v-\Delta_q v =\lambda m(x)v^{q-1} \quad \mbox{in} \,\, \R^N.
$$
As a result, there holds that
$$
I(u, v)= \lambda \int_{\R^N} m(x) \left(u^q-v^q\right) \,dx+ \lambda \int_{\R^N} m(x) \left(v^q-u^q\right) \,dx=0.
$$
It then follows from Lemma \ref{simple} that the desired conclusion holds. This completes the proof.
\end{proof}


\begin{prop} \label{se}
Assume that $(H)$ holds, {$N \geq 2$}, {$1<p, q<N$, $p \neq q$}, ${a \in C^{0, 1}(\R^N, [0, +\infty))}$ and $a \not\equiv 0$. Then 
$$
\mu_{1,q,1}=\inf \left\{\frac{\frac 1p \int_{\R^N} a(x)|\nabla u|^p\,dx + \frac 1q \int_{\R^N} |\nabla u|^q \, dx}{\frac 1q \int_{\R^N} m(x) |u|^q \,dx} : u \in E \backslash \{0\},\, \int_{\R^N} m(x) |u|^q \,dx>0\right\}.
$$
\end{prop}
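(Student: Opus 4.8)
\emph{Proof plan.} The plan is to show that the two infima coincide by relating the double-phase Rayleigh quotient on the right-hand side to the pure $q$-Rayleigh quotient that characterises $\mu_{1,q,1}$. Throughout, for $u\in E$ with $\int_{\R^N} m(x)|u|^q\,dx>0$, write
$$
R(u):=\frac{\frac 1p \int_{\R^N} a(x)|\nabla u|^p\,dx + \frac 1q \int_{\R^N} |\nabla u|^q \, dx}{\frac 1q \int_{\R^N} m(x) |u|^q \,dx},
\qquad
Q(u):=\frac{\int_{\R^N} |\nabla u|^q\,dx}{\int_{\R^N} m(x)|u|^q\,dx},
$$
and let $I_*$ denote the infimum of $R$ appearing in the statement. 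A direct computation gives the pointwise identity
$$
R(u)=\frac qp\,\frac{\int_{\R^N} a(x)|\nabla u|^p\,dx}{\int_{\R^N} m(x)|u|^q\,dx}+Q(u),
$$
which is the algebraic backbone of the argument.

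First I would record the auxiliary fact that
$$
\mu_{1,q,1}=\inf\Big\{\,Q(u): u\in E\setminus\{0\},\ \int_{\R^N} m(x)|u|^q\,dx>0\,\Big\}.\qquad(\star)
$$
By Theorem \ref{thm0} and the variational characterisation used in the proof of Proposition \ref{nodal}, applied with $a\equiv 1$ and $r=q$, the first eigenvalue $\mu_{1,q,1}$ equals the infimum of $Q$ over the corresponding single-phase energy space $\mathcal V$, the completion of $C_0^\infty(\R^N)$ in the norm $\|\nabla u\|_q+(\int_{\R^N}|u|^q\max\{m_2,\omega\})^{1/q}$. Since the $E$-norm dominates this norm (using $L^{\xi}(\R^N)\hookrightarrow L^q(\R^N)$ from Lemma \ref{embedding1}), one has $E\hookrightarrow \mathcal V$, so $Q(u)\ge \mu_{1,q,1}$ for every admissible $u\in E$, which gives ``$\ge$'' in $(\star)$. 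For ``$\le$'', I would argue by approximation: given $\eps>0$, choose $w\in\mathcal V$ with $Q(w)<\mu_{1,q,1}+\eps$ and then $\phi\in C_0^\infty(\R^N)\subset E$ with $\phi\to w$ in the $\mathcal V$-norm; the continuity of $u\mapsto Q(u)$ along $\mathcal V$-convergent sequences follows because $\int_{\R^N} m_1|\phi|^q\to\int_{\R^N} m_1|w|^q$ by $m_1\in L^{N/q}(\R^N)$ together with the embedding into $L^{q^*}(\R^N)$, while $\int_{\R^N} m_2|\phi|^q\to\int_{\R^N} m_2|w|^q$ is controlled by the weighted $L^q$-part of the norm (as $m_2\le\max\{m_2,\omega\}$). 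Hence $\inf_E Q\le \mu_{1,q,1}+2\eps$, and $(\star)$ follows on letting $\eps\to0$.

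Granting $(\star)$, the equality $I_*=\mu_{1,q,1}$ is then purely algebraic. For the lower bound, since $a\ge 0$ the first term in the displayed identity for $R$ is nonnegative, so $R(u)\ge Q(u)\ge\mu_{1,q,1}$ for every admissible $u$, whence $I_*\ge\mu_{1,q,1}$. For the upper bound I would exploit the non-homogeneity through scaling: for fixed admissible $u$ and $t>0$, using that $E$ is a vector space and $\int_{\R^N} m|tu|^q=t^q\int_{\R^N} m|u|^q>0$,
$$
R(tu)=\frac qp\,t^{p-q}\,\frac{\int_{\R^N} a(x)|\nabla u|^p\,dx}{\int_{\R^N} m(x)|u|^q\,dx}+Q(u).
$$
Because $p\neq q$, the exponent $p-q$ has a definite sign, so letting $t\to+\infty$ when $p<q$ (respectively $t\to0^+$ when $q<p$) forces $t^{p-q}\to0$ and hence $R(tu)\to Q(u)$. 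Consequently $I_*\le Q(u)$ for every admissible $u$, and taking the infimum over $u\in E$ together with $(\star)$ yields $I_*\le\mu_{1,q,1}$. Combining the two bounds gives $I_*=\mu_{1,q,1}$.

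The main obstacle is the identity $(\star)$, since the right-hand infimum in the statement is taken over the double-phase space $E$, whereas $\mu_{1,q,1}$ is defined through the single-phase problem on $\mathcal V$; one must verify that restricting the $q$-Rayleigh quotient to the smaller space $E$ changes neither infimum. The ``$\ge$'' part is immediate from the embedding $E\hookrightarrow\mathcal V$, but the ``$\le$'' part requires the density of $C_0^\infty(\R^N)$ in $\mathcal V$ and the continuity of $u\mapsto\int_{\R^N} m(x)|u|^q\,dx$ along $\mathcal V$-convergent sequences, which is exactly where the structural hypotheses $m_1\in L^{N/q}(\R^N)$ and the weighted $L^q$-control of $m_2$ built into the norm of $E$ (and $\mathcal V$) are used. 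Once $(\star)$ is in place, the scaling identity settles both cases $p<q$ and $q<p$ uniformly, the sign of $a$ giving the lower bound and the sign of $p-q$ the upper bound.
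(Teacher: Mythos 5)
Your proof is correct, and its skeleton coincides with the paper's: the lower bound comes from discarding the nonnegative term $\frac1p\int_{\R^N}a(x)|\nabla u|^p\,dx$ so that $R(u)\ge Q(u)\ge\mu_{1,q,1}$, and the upper bound comes from the same scaling computation, $R(tu)=\frac{q}{p}\,t^{p-q}\,\frac{\int_{\R^N}a(x)|\nabla u|^p\,dx}{\int_{\R^N}m(x)|u|^q\,dx}+Q(u)$ with $t\to+\infty$ when $p<q$ and $t\to0^+$ when $q<p$ (the paper writes $t=n$ and $t=1/n$ respectively). The one genuine difference is where the upper-bound test functions come from. The paper inserts a single function: it takes ``an eigenfunction $u_{1,q,1}\in E$'' of \eqref{eque1} with $a\equiv1$, $r=q$, and scales it; this tacitly assumes that the single-phase eigenfunction lies in the double-phase space $E$, i.e.\ that $\int_{\R^N}a(x)|\nabla u_{1,q,1}|^p\,dx<\infty$, which is not automatic since $u_{1,q,1}$ is a priori only in the single-phase space and $a\in C^{0,1}(\R^N,[0,+\infty))$ need not be bounded. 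You instead scale an \emph{arbitrary} admissible $u\in E$ (for which the coefficient is finite, since $\nabla u\in L^{\xi}(\R^N)$ means exactly $\int_{\R^N}a(x)|\nabla u|^p+|\nabla u|^q\,dx<\infty$), concluding $I_*\le Q(u)$, and then reduce to your identity $(\star)$, $\inf_E Q=\mu_{1,q,1}$, proved via the embedding $E\hookrightarrow\mathcal V$ together with density of $C_0^\infty(\R^N)$ in $\mathcal V$ and continuity of $u\mapsto\int_{\R^N}m(x)|u|^q\,dx$ along $\mathcal V$-convergent sequences (the $m_1$-part by $m_1\in L^{N/q}(\R^N)$ and the $L^{q^*}$-embedding, the $m_2$-part by the weighted $L^q$-component of the norm). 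What each approach buys: the paper's is shorter but rests on the unverified membership $u_{1,q,1}\in E$; yours costs an approximation step but only ever evaluates the weighted $p$-energy on elements of $E$ (ultimately on $C_0^\infty$ functions, where $a$ is locally bounded), so it is more robust and in fact repairs this small implicit assumption in the paper's argument. Minor quibble: in your ``$\ge$'' direction of $(\star)$ you should note that the identification $E\hookrightarrow\mathcal V$ is between completions of the same set $C_0^\infty(\R^N)$ under comparable norms (Lemma \ref{embedding1} giving $\|\nabla u\|_q\le C\|\nabla u\|_\xi$), which is exactly how the paper uses $E\subset D^{1,q}(\R^N)$; this is routine and does not affect correctness.
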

\begin{proof}
Since $\mu_{1,q,1}$ is the first eigenvalue to \eqref{eque1} and $E \subset D^{1,q}(\R^N)$, then
\begin{align*}
\mu_{1,q,1}&=\inf \left\{\frac{\int_{\R^N} |\nabla u|^q \, dx}{\int_{\R^N} m(x) |u|^q \,dx} : u \in  D^{1,q}(\R^N) \backslash \{0\}, \, \int_{\R^N} m(x) |u|^q \,dx>0\right\} \\
 &\leq \inf \left\{\frac{\frac 1p \int_{\R^N} a(x)|\nabla u_{1,q,1}|^p\,dx + \frac 1q \int_{\R^N} |\nabla u|^q \, dx}{\frac 1q \int_{\R^N} m(x) |u|^q \,dx} : u \in E \backslash \{0\}, \, \int_{\R^N} m(x) |u|^q \,dx>0\right\}.
\end{align*}
Let $u_{1,q,1} \in E$ be an eigenfunction to \eqref{eque1} corresponding to $\mu_{1,q,1}$ and $p<q$, then
\begin{align*}
&\inf \left\{\frac{\frac 1p \int_{\R^N} a(x)|\nabla u|^p\,dx + \frac 1q \int_{\R^N} |\nabla u|^q \, dx}{\frac 1q \int_{\R^N} m(x) |u|^q \,dx} : u \in E \backslash \{0\}, \, \int_{\R^N} m(x) |u|^q \,dx>0\right\} \\
&\leq  \frac{\frac{n^p}{p}\int_{\R^N} a(x)|\nabla u_{1,q,1}|^p\,dx +\frac{n^q}{q}\int_{\R^N} |\nabla u_{1,q,1}|^q \, dx}{\frac{n^q}{q} \int_{\R^N} m(x)|u_{1,q,1}|^q \,dx}=\mu_{1,q,1} +o_n(1) \quad \mbox{as} \,\, n \to \infty.
\end{align*}
Similarly, if $q<p$, then
\begin{align*}
&\inf \left\{\frac{\frac 1p \int_{\R^N} a(x)|\nabla u|^p\,dx + \frac 1q \int_{\R^N} |\nabla u|^q \, dx}{\frac 1q \int_{\R^N} m(x) |u|^q \,dx} : u \in E \backslash \{0\}, \, \int_{\R^N} m(x) |u|^q \,dx>0\right\} \\
&\leq  \frac{\frac{1}{pn^p}\int_{\R^N} a(x)|\nabla u_{1,q,1}|^p\,dx +\frac{1}{qn^q}\int_{\R^N} |\nabla u_{1,q,1}|^q \, dx}{\frac{1}{qn^q} \int_{\R^N} m(x)|u_{1,q,1}|^q \,dx}=\mu_{1,q,1} +o_n(1) \quad \mbox{as} \,\, n \to \infty.
\end{align*}
Thus the desired result follows and the proof is completed.
\end{proof}

\begin{rem}
Under the assumptions of Theorem \ref{thm2}, by Theorem \ref{nonexist} and Proposition \ref{se}, we have
$$
\lambda_1>\inf \left\{\frac{\frac 1p \int_{\R^N} a(x)|\nabla u|^p\,dx + \frac 1q \int_{\R^N} |\nabla u|^q \, dx}{\frac 1q \int_{\R^N} m(x) |u|^q \,dx} : u \in E \backslash \{0\},\, \int_{\R^N} m(x) |u|^q \,dx>0\right\}.
$$
\end{rem}

\begin{rem} The arguments developed in this paper allow to obtain similar results if the hypothesis $(H)$ is replaced by the following condition introduced by Szulkin and Willem \cite{p4},
\begin{itemize}
\item[$(\mathcal{H})$]  $m \in L^{1}_{loc}(\R^N)$, $m^+=m_1+m_2\not=0$, $m_1 \in L^{\frac N q}(\R^N)$, for every $y\in\R^N$, $\lim_{x\rightarrow y}|x-y|^qm_2(x)=0$ and $\lim_{|x|\rightarrow\infty}|x|^qm_2(x)=0$, where $m^+:=\max\{m(x), 0\}$.
\end{itemize}
\end{rem}


\end{document}